\newtheorem{theorem}{Theorem}[section]
\newtheorem{lemma}[theorem]{Lemma}
\theoremstyle{definition}
\theoremstyle{remark}
\newtheorem{remark}[theorem]{Remark}
\begin{document}

\title[Solutions of the Differential Inequality with a~Null Lagrangian]
{Solutions of the Differential Inequality with a~Null Lagrangian:
Regularity and Removability of Singularities}

\author[A.~A.~Egorov]{A.~A.~Egorov$^1$}
\address{Sobolev Institute of Mathematics, Novosibirsk, Russia}
\email{yegorov@math.nsc.ru}

\begin{abstract} 
We prove a theorem on self-improving regularity for derivatives of 
solutions of the inequality $F(v'(x))\le KG(v'(x))$
constructed by means of a quasiconvex 
function~$F$
and a null 
Lagrangian~$G$.
We apply this theorem to improve the stability and H\"older 
regularity results of~\cite{Egor2008} and to establish a theorem
on removability of singularities for solutions of this inequality.
\end{abstract}

\maketitle

\section{Introduction} 

\addtocounter{footnote}{+1}\footnotetext{The 
author was supported by the Russian Foundation for Basic 
Research, the State Maintenance Program for the Leading Scientific 
Schools of the Russian Federation, and the Federal Target Program 
``Scientific and Educational Personnel of Innovation
Russia'' for 2009--2013 (contract No.~02.740.11.0457).}
In the present paper, 
which is a sequel 
to~\cite{Egor2008}, 
we study properties of solutions of the following inequality
\begin{equation}
\label{eq:nl-ineqlty}
F(v'(x))\le KG(v'(x))\quad\text{a.e.\ }V
\end{equation}
constructed by means of a quasiconvex 
function~$F$
and a null 
Lagrangian~$G$.
The results on closer of sets of such solutions with respect to 
the local convergence in the Lebesgue space, their H\"older regularity,
and precompactness of these sets with respect to the locally uniform 
convergence
\cite[Theorems~7 and~8 and Corollary~1]{Egor2008}
are applied to obtaining the stability theorems 
\cite[Theorems~1 and~3--6]{Egor2008}
for the class of solutions to the equation
\begin{equation}
\label{eq:nl-eqtn}
F(u'(x))=G(u'(x))\quad\text{a.e.\ }V.
\end{equation}
The main result below is the theorem on self-improving regularity
for derivatives of solutions 
of~\eqref{eq:nl-ineqlty} 
(Theorem~\ref{th:reglrty}).
We apply this result to improve the above-mentioned 
H\"older regularity and stability theorems
(see Theorems~\ref{th:Hldr}--\ref{th:W-stblty}).
Also we prove the theorem on removability of singularities for 
solutions of~\eqref{eq:nl-ineqlty}
(Theorem~\ref{th:remvblty}).  

Observe that if for a mapping
$v\colon V\subset\Bbb R^n\to\Bbb R^n$,
$n\ge2$,
we define 
$F(v'(x))=|v'(x)|^n$
and 
$G(v'(x))=\det v'(x)$
then 
inequality~\eqref{eq:nl-ineqlty}
is the dilatation inequality
\begin{equation}
\label{eq:diltn}
|v'(x)|^n\le K\det v'(x)\quad\text{a.e.\ }V.
\end{equation}
We remind that a solution of the class
$W^{1,n}(V;\Bbb R^n)$ 
of the dilatation inequality is called
{\it a mapping with 
$K$-bounded 
distortion or 
a
$K$-quasiregular mapping}.
The theory of mappings with bounded distortion is the key part of the
geometric function theory which has many diverse applications (for example, 
see monographs 
\cite{Iwan1992,IwanM1993,Kopy1990,Resh1982a,Resh1989,Resh1996}
and the bibliography therein).

A remarkable feature of the class of conformal mappings (mappings with 
$1$-bounded 
distortion)
is the stability phenomenon.
The first results on stability of classes of plane and spatial conformal 
mappings were obtained by M.~A.~Lavrent'ev while studying quasiconformal 
mappings (homeomorphic mappings with bounded 
distortion)~\cite{Lavr1935,Lavr1954}. 
Later, the theory of stability of conformal mappings which appeared in the 
framework of the theory of quasiconformal mappings was developed mainly by 
M.~A.~Lavrent'ev himself as well as P.~P.~Belinskii and Yu.~G.~Reshetnyak (for example, 
see the monographs 
\cite{Beli1974,Kopy1990,Resh1982a,Resh1989,Resh1996} 
and bibliography therein). 
One of the main results of this theory is the following assertion (for example, see
\cite{Beli1974,Kopy1990,Resh1982a,Resh1989,Resh1996,Rick1993}): 
{\it For a ball 
$B(x,r)\subset\Bbb R^n$, 
$n\ge2$, 
each 
$K$-quasiconformal mapping
$v\colon B(x,r)\to\Bbb R^n$ 
with 
coefficient~$K$ 
close to~1 deviates little in the 
$C$-norm 
from conformal mappings on each subball 
$B(x,\rho r)$, 
$0<\rho<1$; 
moreover, the deviation vanishes as 
$K\to1$.}
The stability property of conformal mappings is applied to obtaining important 
theorems both in the theory of quasiconformal mappings and its applications; 
therefore, finding other classes of mappings possessing the stability properties 
represents an interesting problem. 
Starting from the stability theory for conformal mappings, 
A.~P.~Kopylov~\cite{Kopy1982a} 
(also 
see~\cite{Kopy1990}) 
proposed the general conception of stability in the 
$C$-norm 
for classes of mappings, while he himself named 
$\xi$-stability. 
This conception agrees properly with the theory of stability of conformal mappings 
(see~\cite{Kopy1982a,Kopy1990}). 
Indeed, the above result is equivalent to the theorem on 
$\xi$-stability 
of the class of conformal mappings in the class of quasiconformal mappings (see  
\cite[Chapter 1, \S~1.3]{Kopy1990}). 
In the 
$\xi$-stability 
framework various stability theorems were obtained for classes of multidimensional 
holomorphic mappings, classes of solutions to elliptic systems of linear partial
differential equations, classes of homotheties, and a series of other mapping classes 
(for example, see the articles by 
Kopylov~\cite{Kopy1982a,Kopy1990,Kopy1995b}, 
Dairbekov~\cite{Dair1993,Dair1995b}, 
Sokolova~\cite{Soko1991a,Soko1991b}, and the bibliography therein).
Most of the above-mentioned
mapping classes can be considered as classes of solutions to equations of the 
form~\eqref{eq:nl-eqtn}.
In~\cite{Egor2008} we obtained a theorem on 
$\xi$-stability 
of classes of solutions 
to~\eqref{eq:nl-eqtn}
(see 
\cite[Theorem~1]{Egor2008}).
Some notes on the history of results on the self-improving regularity and on the 
removability of singularities for mappings with bounded distortion can be found
in the book of T.~Iwaniec and 
G.~Martin~\cite{IwanM2001}
(see also
\cite{Dair1992,FaraZ2006,IwanM1993,Iwan1992}).
We would like to point out that removability problems and regularity theory 
under minimal hypothesis are of crucial interest in PDE's.
The recent article of 
A.~P.~Kopylov~\cite{Kopy2007}
contains an exposition of new results on stability and regularity of solutions 
to elliptic systems of linear partial differential equations.
As 
in~\cite{Egor2003,Egor2005,Egor2007,Egor2008}
we develop the approaches and methods used for investigations
of mappings with bounded distortion to study properties of solutions 
of~\eqref{eq:nl-ineqlty}.
In particular, we apply the Hodge decomposition theory developed by 
T.~Iwaniec and G.~Martin~\cite{IwanM1993,Iwan1992,IwanM2001}
and used by them, for instance, for obtaining the theorems on 
self-improving regularity and removability of singularities for 
mappings with bounded distortion (for example, see
\cite[Theorem~14.4.1 and~17.3.1]{IwanM2001}).

We now describe the structure of the article. 
In~\S~2 we give the basic notation and terms. 
In~\S~3 we state the main results. 
In~\S~4 we expose the preliminary results.
The proof of Theorem~3.1 is presented in~\S~5. 
In \S~6 we give the proof of Theorem~3.5.

\section{Notation and Terminology}

Let 
$A$ 
be a set in 
$\Bbb R^n$. 
The topological boundary 
of~$A$ 
is denoted 
by~$\partial A$. 
The diameter 
of~$A$ 
is defined as 
$\operatorname{diam}A:=\sup\{|x-y|:x,y\in A\}$. 
The outer Lebesgue measure 
of~$A$ 
is denoted 
by~$|A|$.
We use the symbol
$\dim_HA$
for the Hausdorff dimension 
of~$A$.

The set
$\Bbb R^{m\times n}:=\{\zeta=(\zeta_{\mu\nu})
_{\genfrac{}{}{0pt}{}{\mu=1,\dots,m}{\nu=1,\dots,n}}
:\zeta_{\mu\nu}\in\Bbb R,\ \mu=1,\dots,m,\ \nu=1,\dots,n\}$
consists of all real
($m\times n$)-matrices. 
We identify a matrix
$\zeta=(\zeta_{\mu\nu})
_{\genfrac{}{}{0pt}{}{\mu=1,\dots,m}{\nu=1,\dots,n}}
\in\Bbb R^{m\times n}$
with the linear mapping
$(\zeta_1,\dots,\zeta_m)\colon\Bbb R^n\to\Bbb R^m$, 
where
$\zeta_\mu(x):=\sum_{\nu=1}^n\zeta_{\mu\nu}x_\nu$,
$\mu=1,\dots,m$,
$x=(x_1,\dots,x_n)\in\Bbb R^n$.
The operator norm 
in~$\Bbb R^{m\times n}$
is defined as
$|\zeta|:=\sup\{|\zeta(x)|:x\in\Bbb R^n,\ |x|<1\}$; 
and the Hilbert--Schmidt norm is defined as
$\|\zeta\|:=
\left(\sum_{\mu=1}^m\sum_{\nu=1}^n\zeta_{\mu\nu}^2\right)^{1/2}$.
The number of 
$k$-tuples 
of ordered indices in
$\Gamma_n^k:=\{I=(i_1,\dots,i_k):1\le i_1<\dots<i_k\le n,\
i_\varkappa\in\{1,\dots,n\},\ \varkappa=1,\dots,k\}$
equals the binomial coefficient
$\binom nk:=\frac{n!}{k!(n-k)!}$.
Given
$x\in\Bbb R^n$
and
$I\in\Gamma_n^k$,
we put
$x_I:=(x_{i_1},\dots,x_{i_k})\in\Bbb R^k$.
For 
$I\in\Gamma_n^k$
we denote
$dx_I=dx_{i_1}\wedge\dots\wedge dx_{i_k}$.
We use a convection that 
$dx_I=1$
if
$k=0$.
The entries of the
$k$th 
associated matrix
$M_k(\zeta)
:=(\operatorname{det}_{JI}\zeta)_{J\in\Gamma_m^k,I\in\Gamma_n^k}
\in\Bbb R^{\binom{m}{k}\times\binom{n}{k}}$ 
for the matrix
$\zeta\in\Bbb R^{m\times n}$
are the
$k\times k$-minors
$\operatorname{det}_{JI}\zeta:=\operatorname{det}
\left(
\begin{smallmatrix}
\zeta_{j_1i_1}&\hdots&\zeta_{j_1i_k}\\
\vdots&\ddots&\vdots\\
\zeta_{j_ki_1}&\hdots&\zeta_{j_ki_k}
\end{smallmatrix}
\right)$.
Here and in the sequel we enumerate the entries of
$\Upsilon\in\Bbb R^{\binom{m}{k}\times\binom{n}{k}}$
by
lexicographically ordered 
$k$-tuples
$I\in\Gamma_n^k$
and
$J\in\Gamma_m^k$,
i.e.\
$\Upsilon=(\gamma_{JI})_{J\in\Gamma_m^k,I\in\Gamma_n^k}$.
We identify
$M_1(\zeta)$
with~$\zeta$. 

The Jacobian matrix of
$u=(u_1,\dots,u_m)\colon U\subset\Bbb R^n\to\Bbb R^m$
at a point
$x\in U$
is the matrix
$u'(x):=\bigl(\frac{\partial u_\mu}{\partial x_\nu}(x)\bigr)
_{\genfrac{}{}{0pt}{}{\mu=1,\dots,m}{\nu=1,\dots,n}}
$.
If
$I\in\Gamma_n^k$
and
$J\in\Gamma_m^k$
then 
$\frac{\partial u_J}{\partial x_I}(x)
=\frac{\partial(u_{j_1},\dots,u_{j_k})}
{\partial(x_{i_1},\dots,x_{i_k})}(x)
:=\operatorname{det}_{JI}u'(x)$.

Let
$\mathcal V$
be a real vector space. 
We say that a function
$\Phi\colon\mathcal V\to\Bbb R$
is
{\it positively homogeneous of degree}
$p\in\Bbb R$
if
$\Phi(tx)=t^p\Phi(x)$
for all
$t>0$
and
$x\in\mathcal V\setminus\{0\}$.
Following 
Ch.~B.~Morrey~\cite{Morr1966},
we say that a continuous function
$F\colon\Bbb R^{m\times n}\to\Bbb R$
is 
{\it quasiconvex}, 
if
\begin{equation}
\label{eq:qc}
|B(0,1)|F(\zeta)\le\int_{B(0,1)}F(\zeta+\varphi'(x))\,dx
\end{equation}
for all
$\varphi\in C_0^\infty(B(0,1);\Bbb R^m)$
and
$\zeta\in\Bbb R^{m\times n}$.
Let 
$p\ge1$. 
Following 
M.~A.~Sychev~\cite{Sych1998}, 
we say that a quasiconvex 
function~$F$ 
is {\it strictly 
$p$-quasiconvex} 
if, for 
$\zeta\in\Bbb R^{m\times n}$ 
and 
$\varepsilon,C>0$, 
there is 
$\delta=\delta(\zeta,\varepsilon,C)>0$ 
such that, for each mapping 
$\varphi\in C_0^\infty(B(0,1);\Bbb R^m)$ 
satisfying 
$\|\varphi'\|_{L^p(B(0,1);\Bbb R^{m\times n})}
\le C|B(0,1)|^{1/p}$,
the condition
$\int_{B(0,1)}F(\zeta+\varphi'(x))\,dx
\le|B(0,1)|(F(\zeta)+\delta)$
implies 
$|\{x\in B(0,1):|\varphi'(x)|\ge\varepsilon\}|
\le\varepsilon|B(0,1)|$.
Observe that in the mathematical literature the term strictly 
quasiconvexity is also used for another property (which is close 
but nonequivalent to ours) consisting in the fact that the strict 
inequality in the definition of 
quasiconvexity~\eqref{eq:qc} 
is valid for nonzero 
mappings$~\varphi$ 
(for example, 
see~\cite{KnopS1984}). 
In this article we use the term in the sense of M.~A.~Sychev's 
definition~\cite{Sych1998}. 
In the case 
$p>1$ 
the notion of strictly 
$p$-quasiconvexity 
for 
functions~$F$ 
of this article is equivalent to the notion of strictly closed 
$p$-quasiconvexity 
from J.~Kristensen's
article~\cite{Kris1994} 
which is defined in terms of the theory of gradient Young measures 
(see \cite[Proposition 3.4]{Kris1994}).
Observe that we can replace the ball
$B(0,1)$ 
in the definitions of quasiconvexity and strictly 
$p$-quasiconvexity
by an arbitrary bounded 
domain~$U$ 
with 
$|\partial U|=0$
(for example,
see~\cite{Mull1999}).
A function
$G\colon\Bbb R^{m\times n}\to\Bbb R$
is a 
{\it null Lagrangian}
if both 
functions~$G$ 
and~$-G$ 
are quasiconvex. 
The term ``null Lagrangian'' appeared due to the following fact: 
The Euler--Lagrange equation corresponding to the variational 
integral
$\int_UG(u'(x))\,dx$
with null 
Lagrangian~$G$
holds identically for all admissible deformations
$u\colon U\subset\Bbb R^n\to\Bbb R^m$
(see~\cite{Ball1977b} 
and also~\cite{IwanM2001,BallCO1981,Daco1982,Daco1989,Mull1999}).
The only the affine combinations of minors (called 
{\it quasiaffine functions}) 
are null Lagrangians 
\cite{Edel1969,Land1942} 
(also see
\cite{Ball1977a,Ball1977b,BallCO1981,Daco1982,Daco1989,IwanM2001,
Morr1966,Mull1999}); 
i.e.
\begin{equation}
\label{eq:reprsntn-nl}
G(\zeta)=\gamma_0
+\sum_{k=1}^{\min\{m,n\}}\sum_{J\in\Gamma_m^k,I\in\Gamma_n^k}
\gamma_{JI}\operatorname{det}_{JI}\zeta,\quad 
\zeta\in\Bbb R^{m\times n},
\end{equation}
for some
$\gamma_0,\gamma_{JI}\in\Bbb R$.

\section{Statement of the Main Results}
\label{sc:main-reslts}

Fix a number
$k\in\Bbb N$,
$2\le k\le\min\{n,m\}$.
Below we assume that continuous functions
$F\colon\Bbb R^{m\times n}\to\Bbb R$
and 
$G\colon\Bbb R^{m\times n}\to\Bbb R$
satisfy the following conditions:

(H1)
$F$
is a quasiconvex function;

(H2)
$G$
is a null Lagrangian;
 
(H3)
$F$
and~$G$
are positively homogeneous of 
degree~$k$; 

(H4)
$\sup\{K\ge0:F(\zeta)\ge KG(\zeta),\ 
\zeta\in\Bbb R^{m\times n}\}=1$;

(H5)
$c_F:=\inf\{F(\zeta):\zeta\in\Bbb R^{m\times n},\ 
|\zeta|=1\}>0$;

(H6)
$d_G:=\sup\bigl\{\sum_{J\in\Gamma_m^k,I\in\Gamma_n^k}
|\gamma_{JI}||x_I|^2:x\in\Bbb R^n,\ |x|=1\bigr\}<kc_F/(n-k)$
in the case
$k<n$.

Here the 
coefficients~$\gamma_{JI}$ 
are taken 
from~\eqref{eq:reprsntn-nl} 
for the null 
Lagrangian~$G$.
By~(H3), the 
representation~\eqref{eq:reprsntn-nl} 
for the null 
Lagrangian~$G$
consists only of 
($k\times k$)-minors; 
i.e.,
\begin{equation}
\label{eq:reprsntn-G}
G(\zeta)=\sum_{J\in\Gamma_m^k,I\in\Gamma_n^k}
\gamma_{JI}\operatorname{det}_{JI}\zeta,
\quad\zeta\in\Bbb R^{m\times n}.
\end{equation}
Since 
$F$ 
is continuous, (H3) implies the inequalities
\begin{equation}
\label{eq:l-u-bnds-F}
c_F|\zeta|^k\le F(\zeta)\le C_F|\zeta|^k,\quad
\zeta\in\Bbb R^{m\times n},
\end{equation}
with the 
constants~$c_F$ 
from (H5) and
$C_F:=\sup\{F(\zeta):\zeta\in\Bbb R^{m\times n},\ |\zeta|=1\}<\infty$.

\begin{theorem}[Self-improving regularity]
\label{th:reglrty} 
Suppose 
that~$F$ 
and~$G$ 
satisfy 
{\rm(H2)--(H5)}.
Let
$K{\ge}1$. 
Then there exist two numbers
$q(F,G,K)$
and
$p(F,G,K)$
with 
$1<q(F,G,K)<k<p(F,G,K)$ 
such that for a given exponent
$p>q(F,G,K)$
every mapping 
$v\in W_{\operatorname{loc}}^{1,p}(V;\Bbb R^m)$,
which is defined on an open set 
$V\subset\Bbb R^n$
and satisfies 
inequality~\eqref{eq:nl-ineqlty},
actually lies in 
$W_{\operatorname{loc}}^{1,s}(V;\Bbb R^m)$
far all
$s\in(q(F,G,K),p(F,G,K))$. 
Moreover, for each test function
$\varphi\in C_0^\infty(V)$ 
we have the Caccioppoli-type inequality
\begin{equation}
\label{eq:Caccioppoli-ineqlty}
\|\varphi v'\|_{L^s(V;\Bbb R^{m\times n})}
\le C(F,G,K,s)\|v\otimes\varphi'\|_{L^s(V;\Bbb R^{m\times n})}
\end{equation}
for some constant
$C(F,G,K,s)>0$.
\end{theorem}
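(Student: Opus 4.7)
The strategy is the classical Iwaniec--Martin scheme (compare~\cite[Theorem~14.4.1]{IwanM2001}) adapted to the quasiconvex / null-Lagrangian setting. One first establishes a Caccioppoli inequality~\eqref{eq:Caccioppoli-ineqlty} at a single exponent $s$ very close to $k$ (on either side), with a constant depending explicitly on $|s-k|$, and then invokes a Gehring--Sbordone reverse-H\"older lemma to open up a two-sided interval $(q(F,G,K),p(F,G,K))$ of admissible exponents on which $v\in W^{1,s}_{\operatorname{loc}}(V;\Bbb R^m)$.

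For the Caccioppoli step I would fix a non-negative $\varphi\in C_0^\infty(V)$, regularise $v$ by mollification (justifying the limit \emph{a posteriori} from the very integrability being proved), and apply an Iwaniec--Martin Hodge-type decomposition to the matrix field $\Phi:=|\varphi v'|^{s-k}\varphi v'$, extended by zero to $\Bbb R^n$. Write $\Phi=\eta'+R$, where $\eta\in W_0^{1,s'}(\Bbb R^n;\Bbb R^m)$ and the non-exact remainder $R$ satisfies the sharp Calder\'on--Zygmund bound
\begin{equation*}
\|R\|_{L^s(\Bbb R^n)}\le\Lambda(s)\|\Phi\|_{L^s(\Bbb R^n)},\qquad\Lambda(s)\to 0\ \text{as}\ s\to k.
\end{equation*}
The two structural hypotheses are now exploited separately. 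Quasiconvexity of $F$~(H1) together with the degree-$k$ homogeneity~(H3) yields the one-sided comparison $\int F(\varphi v'+\eta')\,dx\ge\int F(\varphi v')\,dx$, modulo error terms controlled by $\|R\|_{L^s}$ and $\|v\otimes\varphi'\|_{L^s}$; the null-Lagrangian property of $G$~(H2) together with~\eqref{eq:reprsntn-G} makes $u\mapsto\int G(u')\,dx$ depend only on the boundary data of $u$, so $\int G(\varphi v'+\eta')\,dx=\int G(\varphi v')\,dx$ holds as an exact identity. Inserting both into the pointwise bound $F(v')\le KG(v')$ leads, after standard manipulations, to an inequality of the form
\begin{equation*}
\int_V F(\varphi v')\,dx\le K\int_V G(\varphi v')\,dx+C_1\Lambda(s)\|\varphi v'\|_{L^s}^{s}+C_2\|v\otimes\varphi'\|_{L^s}^{s}.
\end{equation*}

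To close the argument, the coercivity $F(\zeta)\ge c_F|\zeta|^k$ from~(H5) is applied on the left, while the strict gap in~(H6) between $d_G$ and $kc_F/(n-k)$ ensures that the first two terms on the right can be absorbed into the left-hand side once $\Lambda(s)$ is sufficiently small. The resulting threshold on $\Lambda(s)$---imposed by $K$, $c_F$, and $d_G$---is precisely what determines the endpoints $q(F,G,K)<k<p(F,G,K)$. The main obstacle, in my view, is the quantitative $L^s\to L^s$ estimate $\Lambda(s)=O(|s-k|)$ on the non-exact part of the Hodge decomposition: a Calder\'on--Zygmund continuity statement whose constants must be tracked explicitly through the construction, and which is what permits the a priori borderline inequality $F(v')\le KG(v')$ to be converted into a genuine coercive estimate. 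Once the Caccioppoli inequality is available at one exponent on each side of $k$, self-improvement to the full open interval $(q,p)$ follows from a standard Gehring reverse-H\"older argument, and a density and mollification step recovers the general $v\in W^{1,p}_{\operatorname{loc}}(V;\Bbb R^m)$ case from the smooth one.
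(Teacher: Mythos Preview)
Your outline has a genuine gap, and it also invokes hypotheses the theorem does not assume.

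First, Theorem~\ref{th:reglrty} requires only {\rm(H2)--(H5)}: neither quasiconvexity~{\rm(H1)} nor the gap condition~{\rm(H6)} is available. Your absorption step, which you say is driven by ``the strict gap in~{\rm(H6)} between $d_G$ and $kc_F/(n-k)$'', therefore cannot be used. More seriously, even if {\rm(H1)} were assumed, the inequality $\int F(\varphi v'+\eta')\,dx\ge\int F(\varphi v')\,dx$ does \emph{not} follow from quasiconvexity: quasiconvexity compares $\int_U F(\zeta+\psi')$ with $|U|F(\zeta)$ for a \emph{constant} matrix~$\zeta$, and says nothing when the base field $\varphi v'$ is variable (and is not even a gradient). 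Likewise the null-Lagrangian identity $\int G(\varphi v'+\eta')=\int G(\varphi v')$ fails as stated, since $\varphi v'$ is not a Jacobian matrix; only $h':=(\varphi v)'=\varphi v'+v\otimes\varphi'$ is.

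The paper's route avoids all of this. One works with $h=\varphi v$ and uses only the \emph{pointwise} coercivity $c_F|\zeta|^k\le F(\zeta)$ from {\rm(H3)}--{\rm(H5)} together with $F(v')\le KG(v')$ to get $|h'|^k\le c_F^{-1}KG(h')+\text{(lower order in }v\otimes\varphi')$. After multiplying by $|h'|^{p-k}$ and integrating, the null-Lagrangian structure enters through the representation~\eqref{eq:reprsntn-G} of $G$ as a combination of $k\times k$ minors, to which one applies the fundamental inequality for subdeterminants (Theorem~\ref{th:L-p-ineqlty-minors-Jacbn-mtrx}):
\[
\Bigl|\int|h'|^{p-k}\operatorname{det}_{JI}h'\Bigr|\le C(k)\Bigl|1-\tfrac pk\Bigr|\int|h'|^p.
\]
This is where the Hodge decomposition actually lives (inside the proof of Theorem~\ref{th:estmts-beyond-ntrl-exp}), and it is what produces the small factor $|1-p/k|$ that plays the role of your $\Lambda(s)$. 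The absorption is then governed by the explicit condition $\frac{C_4(G)K}{c_F}\bigl|1-\frac pk\bigr|<1$, yielding $q(F,G,K)=k\bigl(1-\frac{c_F}{C_4(G)K}\bigr)$ and $p(F,G,K)=k\bigl(1+\frac{c_F}{C_4(G)K}\bigr)$; no appeal to {\rm(H1)} or {\rm(H6)} is needed. The Gehring step you describe is indeed the final ingredient, used to show that the set of admissible exponents is open and hence equals the full interval.
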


The following 
Theorems~\ref{th:Hldr}--\ref{th:W-stblty} 
are a straightforward consequence of 
Theorems~\ref{th:reglrty}
and 
\cite[Theorem~8, 4, and~6]{Egor2008}.

\begin{theorem}[H\"older regularity]
\label{th:Hldr}
Let 
$F$ 
and 
$G$ 
be functions satisfying {\rm (H2)--(H6)}. 
Put 
$K_0=\infty$ 
for 
$k=n$ 
and 
$K_0=\frac{kc_F}{(n-k)d_G}$
for
$k<n$. 
Suppose that 
$K\in[1,K_0)$ 
and 
$\delta\in(0,1)$ 
satisfy the inequality
\begin{equation}
\label{eq:inqlty-delta}
\frac{Kd_G}{kc_F}\le\frac1{n-k+k\delta}.
\end{equation}
Let 
$V$ 
be an open set 
in~$\Bbb R^n$. 
Then each solution
$v\in W_{\operatorname{loc}}^{1,p}(V;\Bbb R^m)$
of
inequality~\eqref{eq:nl-ineqlty}
satisfies the H\"older condition with 
exponent~$\delta$ 
on each compact subset 
in~$V$.
\end{theorem}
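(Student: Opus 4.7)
The plan is to derive Theorem~\ref{th:Hldr} by combining Theorem~\ref{th:reglrty} (self-improving regularity) with the Hölder regularity result \cite[Theorem~8]{Egor2008}. The latter, under the full set of hypotheses (H2)--(H6), converts higher integrability of the derivative $v'$ into a quantitative Hölder estimate, with the admissible exponent $\delta$ determined by an inequality of exactly the form \eqref{eq:inqlty-delta}. The only gap we must bridge is that \cite[Theorem~8]{Egor2008} requires the solution to lie in a Sobolev class $W^{1,s}_{\operatorname{loc}}$ for some $s$ exceeding a threshold tied to the Morrey embedding; Theorem~\ref{th:reglrty} furnishes precisely this upgrade.

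Concretely, I would fix $K\in[1,K_0)$ and $\delta\in(0,1)$ satisfying \eqref{eq:inqlty-delta}, then take an arbitrary solution $v\in W^{1,p}_{\operatorname{loc}}(V;\mathbb{R}^m)$ with $p>q(F,G,K)$. Theorem~\ref{th:reglrty} immediately yields $v\in W^{1,s}_{\operatorname{loc}}(V;\mathbb{R}^m)$ for every $s\in(q(F,G,K),p(F,G,K))$, together with the Caccioppoli-type bound \eqref{eq:Caccioppoli-ineqlty}. Choosing $s$ sufficiently close to $p(F,G,K)$, the improved integrability feeds directly into \cite[Theorem~8]{Egor2008}, whose hypothesis (H6) and the bound $K<K_0$ are precisely what is assumed here. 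The Hölder condition on every compact subset of $V$ follows, with the exponent $\delta$ allowed by the arithmetic inequality \eqref{eq:inqlty-delta}.

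The only point of care—rather than a substantive obstacle—is tracking that the range $(q(F,G,K),p(F,G,K))$ supplied by Theorem~\ref{th:reglrty} is large enough to realize every $\delta$ permitted by \eqref{eq:inqlty-delta}; in other words, one must verify that the threshold on $s$ required in \cite[Theorem~8]{Egor2008} for a given $\delta$ is strictly below $p(F,G,K)$. This is a bookkeeping exercise with the constants $c_F$, $d_G$, $k$, $n$, and $K$, and is the reason the authors describe the theorem as a \emph{straightforward consequence} of the two earlier results rather than a new theorem requiring its own proof architecture.
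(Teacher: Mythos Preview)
Your proposal is correct and follows exactly the paper's approach: the paper states, without further detail, that Theorem~\ref{th:Hldr} is a straightforward consequence of Theorem~\ref{th:reglrty} combined with \cite[Theorem~8]{Egor2008}. Your caveat about tracking the integrability threshold is essentially moot, since the hypothesis needed for \cite[Theorem~8]{Egor2008} is the natural exponent $s=k$, which always lies in the interval $(q(F,G,K),p(F,G,K))$ produced by Theorem~\ref{th:reglrty}.
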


\begin{theorem}[Stability in the $C$-norm]
\label{th:C-stblty} 
Suppose 
that~$F$ 
and~$G$ 
satisfy 
{\rm(H1)--(H6)}.
Let
$K\ge1$,
and let
$q(F,G,K)$
denote the exponent from
Theorem~{\rm\ref{th:reglrty}}.
Let 
$V$ 
be a domain 
in~$\Bbb R^n$, 
and let 
$U$ 
be a compact subset 
in~$V$. 
Then there is a function 
$\alpha(K)=\alpha_{F,G,V,U}(K)$ 
defined for 
$1\le K<K_0$ 
and such that 
$\lim_{K\to1}\alpha(K)=\alpha(1)=0$ 
and, for each mapping 
$v\in W_{\operatorname{loc}}^{1,p}(V;\Bbb R^m)$,
$p>q(F,G,K)$,
which satisfies 
inequality~\eqref{eq:nl-ineqlty}
there is a mapping
$u\in W_{\operatorname{loc}}^{1,k}(V;\Bbb R^m)$
which is a solution
to~\eqref{eq:nl-eqtn}
such that
\begin{equation}
\label{eq:C-estmt-subdmn}
\|v-u\|_{C(U;\Bbb R^m)}\le\alpha(K)\operatorname{diam}v(V).
\end{equation}
\end{theorem}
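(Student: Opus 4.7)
The plan is to reduce Theorem~\ref{th:C-stblty} to \cite[Theorem~4]{Egor2008}, which asserts exactly the desired $C$-stability conclusion, but only under the stronger a priori regularity assumption $v\in W_{\operatorname{loc}}^{1,k}(V;\Bbb R^m)$. The statement here, by contrast, requires only that $v\in W_{\operatorname{loc}}^{1,p}(V;\Bbb R^m)$ for some $p>q(F,G,K)$, and in general $q(F,G,K)<k$. The first and key step is therefore to upgrade the regularity: since $p>q(F,G,K)$ and $k\in(q(F,G,K),p(F,G,K))$ by Theorem~\ref{th:reglrty}, and since hypotheses (H2)--(H5) are subsumed by (H1)--(H6), Theorem~\ref{th:reglrty} applies and yields $v\in W_{\operatorname{loc}}^{1,k}(V;\Bbb R^m)$.

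With this upgrade in hand, the conclusion follows by a direct application of \cite[Theorem~4]{Egor2008} to $v$. Under the hypotheses (H1)--(H6), for the given domain $V$ and compact subset $U\subset V$, that theorem produces a solution $u\in W_{\operatorname{loc}}^{1,k}(V;\Bbb R^m)$ of the equation~\eqref{eq:nl-eqtn} together with a function $\alpha(K)=\alpha_{F,G,V,U}(K)$, defined on $[1,K_0)$, satisfying $\lim_{K\to1}\alpha(K)=\alpha(1)=0$, and for which the estimate~\eqref{eq:C-estmt-subdmn} holds. Setting $\alpha(K)$ in the present theorem to be precisely this function finishes the argument.

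The genuinely new ingredient is thus the self-improving regularity of Theorem~\ref{th:reglrty}, whose proof is the substantial content of the paper and is deferred to~\S5; once it is granted, the present theorem is purely a combination, and no additional estimates need to be carried out here. The only nontrivial point to check carefully is that the hypotheses (H1)--(H6) used in the current formulation match those under which \cite[Theorem~4]{Egor2008} is stated, and that the regularity threshold $q(F,G,K)$ produced by Theorem~\ref{th:reglrty} is compatible with (in particular, not larger than) the one implicitly assumed in the cited result. This matching is straightforward but is the main bookkeeping obstacle; everything else is essentially a transcription.
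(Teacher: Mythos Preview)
Your proposal is correct and matches the paper's own approach exactly: the paper states that Theorem~\ref{th:C-stblty} is a straightforward consequence of Theorem~\ref{th:reglrty} and \cite[Theorem~4]{Egor2008}, which is precisely the two-step reduction you describe (upgrade regularity via Theorem~\ref{th:reglrty}, then invoke the cited stability theorem). No further argument is given or needed.
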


The next theorem improves 
Theorems~\ref{th:C-stblty}
in the case when the 
function~$F$ 
satisfies the following condition:

(H1$'$) 
$F$ 
is strictly 
$k$-quasiconvex.

Note that 
condition~(H1$'$) 
is stronger that (H1). 
In this case, in addition to the 
estimate~\eqref{eq:C-estmt-subdmn}
of proximity (in the 
$C$-norm) 
of solutions of 
inequality~\eqref{eq:nl-ineqlty}
to solutions to 
equation~\eqref{eq:nl-eqtn}, 
we obtain proximity estimates (in the 
$L^k$-norm) 
for the derivatives of these mappings.

\begin{theorem}[Stability in the Sobolev norm]
\label{th:W-stblty} 
Suppose 
that~$F$ 
and~$G$ 
satisfy 
{\rm(H1$'$)}
and
{\rm(H2)--(H6)}.
Then the conclusion of 
Theorem~{\rm\ref{th:C-stblty}}
is valid together 
with~\eqref{eq:C-estmt-subdmn}
and the following inequality:
\begin{equation}
\label{eq:W-estmt-subdmn}
\|v'-u'\|_{W^{1,k}(U;\Bbb R^{m\times})}\le\alpha(K)\operatorname{diam}v(V).
\end{equation}
\end{theorem}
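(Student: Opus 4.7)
The plan is to derive Theorem~\ref{th:W-stblty} as a direct corollary of Theorem~\ref{th:reglrty} and \cite[Theorem~6]{Egor2008}, exactly along the lines announced before the statement. The $C$-norm estimate~\eqref{eq:C-estmt-subdmn} is inherited from Theorem~\ref{th:C-stblty}; strict $k$-quasiconvexity (H1$'$) enters only in order to upgrade this $C$-proximity to $L^k$-proximity of derivatives and thereby yield~\eqref{eq:W-estmt-subdmn}.

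In order, the steps are the following. First I would apply Theorem~\ref{th:reglrty} to the given $v\in W^{1,p}_{\operatorname{loc}}(V;\Bbb R^m)$ with $p>q(F,G,K)$ in order to obtain $v\in W^{1,s}_{\operatorname{loc}}(V;\Bbb R^m)$ for every $s\in(q(F,G,K),p(F,G,K))$---in particular for $s=k$, since $q(F,G,K)<k<p(F,G,K)$---together with the Caccioppoli-type bound~\eqref{eq:Caccioppoli-ineqlty}. Next, I would fix a relatively compact subdomain $U'$ with $U\subset U'\Subset V$ and invoke \cite[Theorem~6]{Egor2008} on $U'$ with the now-verified hypothesis $v\in W^{1,k}(U';\Bbb R^m)$. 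That reference then supplies an approximating mapping $u\in W^{1,k}_{\operatorname{loc}}(V;\Bbb R^m)$ solving~\eqref{eq:nl-eqtn} together with a common rate function $\alpha(K)$ depending only on $F$, $G$, $V$ and $U$, validating both~\eqref{eq:C-estmt-subdmn} and~\eqref{eq:W-estmt-subdmn}.

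The conceptual content of \cite[Theorem~6]{Egor2008}, which I would quote as a black box but whose mechanism bears recording, is that the null Lagrangian property of $G$ turns $\int_{U'}G(v'(x))\,dx$ into a boundary functional; consequently the energy excess $\int_{U'}(F(v'(x))-F(u'(x)))\,dx$ is dominated, modulo boundary terms controlled by the $C$-distance between $v$ and $u$, by $(K-1)\int_{U'}G(v'(x))\,dx$, which tends to zero as $K\to1$. Strict $k$-quasiconvexity then converts smallness of this excess into $L^k$-closeness of $v'$ and $u'$ via Sychev's measure-theoretic condition, while the higher integrability furnished by Theorem~\ref{th:reglrty} supplies the uniform integrability needed to pass from convergence in measure to convergence in $L^k$.

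The main obstacle I expect is verifying that the rate $\alpha(K)$ emerging from \cite[Theorem~6]{Egor2008}---a priori sensitive to the $W^{1,k}$ norm of $v$ over $U'$---depends in the end only on $F$, $G$, $V$ and $U$ and not on the particular solution. This reduction must be carried out using~\eqref{eq:Caccioppoli-ineqlty} to bound $\|v'\|_{L^k(U';\Bbb R^{m\times n})}$ in terms of $\|v\|_{L^k(V;\Bbb R^m)}$, and then in terms of $\operatorname{diam}v(V)$ through a covering and Poincar\'e-type argument, so that the diameter normalization on the right-hand side of~\eqref{eq:W-estmt-subdmn} is precisely the one recorded in the statement.
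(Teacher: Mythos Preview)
Your proposal is correct and follows precisely the route the paper indicates: the paper states only that Theorems~\ref{th:Hldr}--\ref{th:W-stblty} are ``a straightforward consequence of Theorems~\ref{th:reglrty} and \cite[Theorem~8, 4, and~6]{Egor2008}'', and your plan---use Theorem~\ref{th:reglrty} to upgrade the a priori $W^{1,p}_{\operatorname{loc}}$ regularity to $W^{1,k}_{\operatorname{loc}}$, then feed this into \cite[Theorem~6]{Egor2008}---is exactly that. Your additional commentary on the mechanism behind \cite[Theorem~6]{Egor2008} and on controlling the dependence of $\alpha(K)$ via the Caccioppoli estimate is more detail than the paper provides, but it is consistent with the intended argument.
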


\begin{theorem}[Removability of singularities]
\label{th:remvblty}
Suppose 
that~$F$ 
and~$G$ 
satisfy 
{\rm(H2)--(H5)}.
Let
$K\ge1$,
and let
$q(F,G,K)$
denote the exponent from
Theorem~{\rm\ref{th:reglrty}}.
Consider a domain
$V\subset\Bbb R^n$.
Then for a closed 
subset~$E$
of~$V$ 
with the Hausdorff dimension
$\dim_H(E)<n-q(F,G,K)$
every bounded mapping 
$v\in W_{\operatorname{loc}}^{1,k}(V\setminus E;\Bbb R^m)$
which satisfies 
inequality~\eqref{eq:nl-ineqlty}
can be extended to a mapping of the class
$W_{\operatorname{loc}}^{1,k}(V;\Bbb R^m)$
which is defined over the whole
domain~$V$
and also satisfies inequality~\eqref{eq:nl-ineqlty}.
\end{theorem}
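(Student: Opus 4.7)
I would follow the classical Iwaniec--Martin removability scheme, adapted to the quasiconvex setting of this paper. The idea is to first extend $v$ across $E$ as an element of a low Sobolev class by means of a capacity-type argument, and then invoke Theorem~\ref{th:reglrty} a second time, now on the whole domain $V$, to raise the integrability of the extension back up to the exponent~$k$. Concretely, I would fix $s$ with $q(F,G,K)<s<\min\{k,\,n-\dim_H(E)\}$; this interval is nonempty by the hypothesis $\dim_H(E)<n-q(F,G,K)$. Since $W_{\operatorname{loc}}^{1,k}(V\setminus E;\Bbb R^m)\subset W_{\operatorname{loc}}^{1,s}(V\setminus E;\Bbb R^m)$, $v$ automatically lies in the larger class. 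Define $\tilde v=v$ on $V\setminus E$ and $\tilde v=0$ on $E$; since $|E|=0$, $\tilde v$ is a well-defined element of $L^\infty(V;\Bbb R^m)$.

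The central claim is that $\tilde v\in W_{\operatorname{loc}}^{1,s}(V;\Bbb R^m)$ with distributional derivative coinciding a.e.\ with the classical derivative $v'$. To prove this, for each $\varepsilon>0$ I would construct a cutoff $\eta_\varepsilon\in C_0^\infty(V\setminus E)$ satisfying $0\le\eta_\varepsilon\le 1$, $\eta_\varepsilon\to 1$ pointwise on $V\setminus E$, and $\|\eta_\varepsilon'\|_{L^s(V)}\to 0$ as $\varepsilon\to 0$. Such cutoffs exist because $\dim_H(E)<n-s$ forces $\mathcal H^{n-s}(E)=0$: one covers $E$ by balls $B(x_i,r_i)$ with $\sum_i r_i^{n-s}<\varepsilon$ and glues standard radial bump functions vanishing on each $B(x_i,r_i)$. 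For $\psi\in C_0^\infty(V)$ and $\nu\in\{1,\dots,n\}$, the product $\eta_\varepsilon\psi$ is admissible as a test function on $V\setminus E$, and splitting $\psi=\eta_\varepsilon\psi+(1-\eta_\varepsilon)\psi$ yields
\[
\int_V\tilde v\,\frac{\partial\psi}{\partial x_\nu}\,dx
=-\int_V\eta_\varepsilon\psi\,\frac{\partial v}{\partial x_\nu}\,dx
-\int_V\tilde v\,\psi\,\frac{\partial\eta_\varepsilon}{\partial x_\nu}\,dx
+\int_V(1-\eta_\varepsilon)\,\tilde v\,\frac{\partial\psi}{\partial x_\nu}\,dx.
\]
The last two terms vanish in the limit $\varepsilon\to 0$ by H\"older's inequality on $\operatorname{supp}\psi$ (using $\|\tilde v\|_\infty<\infty$ together with $\|\eta_\varepsilon'\|_{L^s}\to 0$ for the middle term, and $\eta_\varepsilon\to 1$ a.e.\ for the last), while the first tends to $-\int_V\psi\,(\partial v/\partial x_\nu)\,dx$ by dominated convergence. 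This identifies the distributional derivative of $\tilde v$ with $v'$, so $\tilde v\in W_{\operatorname{loc}}^{1,s}(V;\Bbb R^m)$.

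Finally, since $|E|=0$, the inequality $F(\tilde v'(x))\le KG(\tilde v'(x))$ holds a.e.\ on the whole of~$V$. Because $s>q(F,G,K)$, applying Theorem~\ref{th:reglrty} on $V$ itself upgrades the regularity to $\tilde v\in W_{\operatorname{loc}}^{1,s'}(V;\Bbb R^m)$ for every $s'\in(q(F,G,K),p(F,G,K))$; in particular, choosing $s'=k$ (which lies in this interval by the very definition of $q$ and $p$) completes the extension to the required Sobolev class. The main obstacle is the removability step: the cutoffs $\eta_\varepsilon$ must simultaneously exhaust $V\setminus E$ pointwise and keep $\|\eta_\varepsilon'\|_{L^s}$ arbitrarily small, and this delicate balance is precisely what the strict inequality $\dim_H(E)<n-s$ secures. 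Every other step is a soft application of results already stated in the excerpt.
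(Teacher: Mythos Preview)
Your overall strategy matches the paper's: pick $s\in(q(F,G,K),\,n-\dim_H(E))$, extend $v$ across $E$ as a $W^{1,s}_{\operatorname{loc}}$ map using cutoffs that exploit the zero $s$-capacity of $E$, then invoke Theorem~\ref{th:reglrty} on all of $V$ to climb back up to exponent~$k$. The final step and the construction of the cutoffs are fine.

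There is, however, a genuine gap in the removability step. In your displayed identity the term $-\int_V \eta_\varepsilon\psi\,\partial_\nu v\,dx$ is well defined for each fixed~$\varepsilon$ because $\operatorname{supp}(\eta_\varepsilon\psi)\Subset V\setminus E$, but the passage to the limit ``by dominated convergence'' requires a dominating function, and the natural candidate $|\psi|\,|\partial_\nu v|$ is only known to lie in $L^1_{\operatorname{loc}}(V\setminus E)$, not in $L^1(\operatorname{supp}\psi)$ when $\operatorname{supp}\psi$ meets $E$. In other words, you are implicitly assuming $v'\in L^1_{\operatorname{loc}}(V)$, which is precisely the content of the claim you are trying to prove. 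Boundedness of $v$ alone does not supply this; something must control $v'$ uniformly as the cutoffs approach~$E$.

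This is exactly where the paper uses Theorem~\ref{th:reglrty} a \emph{first} time, on $V\setminus E$, to obtain the Caccioppoli estimate~\eqref{eq:Caccioppoli-ineqlty} for every $\varphi\in C_0^\infty(V\setminus E)$. Taking $\varphi_j=(1-\eta_j)\chi$ (with $\chi\in C_0^\infty(V)$ and $\eta_j$ your cutoffs) gives
\[
\|\varphi_j v'\|_{L^s}\le C\,\|v\otimes\varphi_j'\|_{L^s}
\le C\bigl(\|\chi\|_\infty\|v\|_\infty\|\eta_j'\|_{L^s}+\|(1-\eta_j)v\otimes\chi'\|_{L^s}\bigr),
\]
and the right side is bounded uniformly in $j$ because $v$ is bounded, not because $v'$ is already integrable near $E$. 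This yields a uniform $W^{1,s}$ bound on $v_j=\varphi_j v$, and weak compactness then places $\chi v$ in $W^{1,s}_0(V;\Bbb R^m)$. Once you insert this Caccioppoli-based control (or, equivalently, use it to justify your limit via uniform integrability), the rest of your argument goes through.
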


\section{Preliminary Results}

Let
$l\in\Bbb Z$
with
$0\le l\le n$,
and let
$p\ge1$.
Denote by
$L^p(\Bbb R^n;\Lambda^l)$ 
the space of differential 
$l$-forms 
on~$\Bbb R^n$ 
with coefficients in
$L^p(\Bbb R^n)$.

The following theorem is a modification of the result of 
T.~Iwaniec and G.~Martin on integral estimates concerning 
wedge products of closed differential forms 
\cite[Theorem~13.6.1]{IwanM2001}.

\begin{theorem}[Estimates beyond the natural exponent]
\label{th:estmts-beyond-ntrl-exp}
Let 
$n,k\in\Bbb N$
with
$2\le k\le n$.
Consider
$p_1,\dots,p_k,\varepsilon_1,\dots,\varepsilon_k\in\Bbb R$
and
$l_1,\dots,l_k\in\Bbb N$
such that
$1<p_\varkappa<\infty$,
$\frac1{p_1}+\dots+\frac1{p_k}=1$,
$-1\le2\varepsilon_\varkappa\le\frac{p_\varkappa-1}{p_\varkappa}$,
and
$\hat l:=n-l_1-\dots-l_k\ge0$.
Let
$\hat I=(\hat i_1,\dots,\hat i_{\hat l})\in\Gamma_n^{\hat l}$.
Suppose that
$(\varphi_1,\dots,\varphi_k)$
be 
$k$-tuple 
of closed differential forms with
$\varphi_\varkappa\in L^{(1-\varepsilon_\varkappa)p_\varkappa}
(\Bbb R^n;\Lambda^{l_\varkappa})$.
Then
\begin{multline}
\label{eq:estmts-beyond-ntrl-exp}
\int\frac{\varphi_1\wedge\dots\wedge\varphi_k\wedge dx_{\hat I}}
{|\varphi_1|^{\varepsilon_1}\dots|\varphi_k|^{\varepsilon_k}}
\\
\le C(p_1,\dots,p_k)\max(|\varepsilon_1|,\dots,|\varepsilon_k|)
\|\varphi_1\|_{L^{(1-\varepsilon_1)p_1}(\Bbb R^n;\Lambda^{l_1})}^{1-\varepsilon_1}
\dots
\|\varphi_k\|_{L^{(1-\varepsilon_k)p_k}(\Bbb R^n;\Lambda^{l_k})}^{1-\varepsilon_k}.
\end{multline}
\end{theorem}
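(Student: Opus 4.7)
The plan is to adapt the proof of Iwaniec--Martin \cite[Theorem~13.6.1]{IwanM2001} (the case $\hat l=0$) to accommodate the extra closed factor $dx_{\hat I}$. By mollification and truncation it suffices to treat smooth, compactly supported closed forms $\varphi_\varkappa$; the general case follows by density together with the joint continuity of both sides of \eqref{eq:estmts-beyond-ntrl-exp} in the $\varphi_\varkappa$ in the relevant Lebesgue spaces.

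The crucial observation, which motivates the factor $\max(|\varepsilon_1|,\ldots,|\varepsilon_k|)$ on the right, is that the integral \emph{vanishes} when $\varepsilon_1=\cdots=\varepsilon_k=0$. For smooth compactly supported closed $\varphi_1$ the Poincar\'e lemma provides a primitive $\omega_1$ with $d\omega_1=\varphi_1$; since $\varphi_2,\ldots,\varphi_k$ and $dx_{\hat I}$ are closed,
$$\varphi_1\wedge\varphi_2\wedge\dots\wedge\varphi_k\wedge dx_{\hat I}
=d\bigl(\omega_1\wedge\varphi_2\wedge\dots\wedge\varphi_k\wedge dx_{\hat I}\bigr),$$
so the integral over $\mathbb R^n$ vanishes by Stokes' theorem. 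The rest of the proof quantifies how the weighting destroys this exactness.

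Setting $\tilde\varphi_\varkappa:=|\varphi_\varkappa|^{-\varepsilon_\varkappa}\varphi_\varkappa=\varphi_\varkappa+b_\varkappa$, a telescoping expansion gives
$$\int\tilde\varphi_1\wedge\dots\wedge\tilde\varphi_k\wedge dx_{\hat I}
=\sum_{\varkappa=1}^k\int\tilde\varphi_1\wedge\dots\wedge\tilde\varphi_{\varkappa-1}\wedge b_\varkappa\wedge\varphi_{\varkappa+1}\wedge\dots\wedge\varphi_k\wedge dx_{\hat I},$$
because the pure $\varphi_1\wedge\cdots\wedge\varphi_k$-term is annihilated by the Stokes vanishing just noted. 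In each remaining summand the factors to the right of $b_\varkappa$, together with $dx_{\hat I}$, form a closed wedge; thus via the Hodge decomposition on $\mathbb R^n$ I may replace $b_\varkappa$ by its co-exact part, since the exact part contributes zero under another application of integration by parts. The boundedness of the Hodge projection on shifted exponents combined with the nonlinear commutator estimate of Iwaniec--Martin, roughly
$$\bigl\|b_\varkappa-\mathbf E(b_\varkappa)\bigr\|_{L^{p_\varkappa}}
\le C|\varepsilon_\varkappa|\,\|\varphi_\varkappa\|_{L^{(1-\varepsilon_\varkappa)p_\varkappa}}^{1-\varepsilon_\varkappa},$$
extracts the gain $|\varepsilon_\varkappa|$. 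H\"older's inequality with exponents $p_1,\ldots,p_k$ (reciprocals summing to $1$) applied to the product of the pointwise bounds $|\tilde\varphi_j|\le|\varphi_j|^{1-\varepsilon_j}$ then assembles the $L^{(1-\varepsilon_\varkappa)p_\varkappa}$-norms appearing on the right-hand side of \eqref{eq:estmts-beyond-ntrl-exp}.

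The main obstacle is the commutator bound, uniformly valid over the permitted range $-\tfrac12\le\varepsilon_\varkappa\le\tfrac12(p_\varkappa-1)/p_\varkappa$: these are precisely the bounds that keep the shifted exponent $(1-\varepsilon_\varkappa)p_\varkappa$ strictly inside $(1,\infty)$, where the Hodge projection is $L^p$-bounded. Iwaniec and Martin obtain it by complex interpolation, using the analyticity of $z\mapsto|\varphi|^{-z}\varphi$ in a strip around $z=0$ and the $L^p$-boundedness of the Hodge projection on an open set of exponents containing $p_\varkappa$. The additional factor $dx_{\hat I}$ plays only the passive role of a closed companion during the integration by parts step and does not enter the commutator analysis; apart from minor bookkeeping of form degrees in the H\"older step, the argument proceeds exactly as in the $\hat l=0$ case.
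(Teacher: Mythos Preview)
Your outline has a genuine gap in the step following the telescoping expansion. You write each summand as
\[
\int \tilde\varphi_1\wedge\dots\wedge\tilde\varphi_{\varkappa-1}\wedge b_\varkappa\wedge\varphi_{\varkappa+1}\wedge\dots\wedge\varphi_k\wedge dx_{\hat I}
\]
and then assert that, because the factors \emph{to the right} of $b_\varkappa$ (and $dx_{\hat I}$) are closed, the exact part $E(b_\varkappa)$ of $b_\varkappa$ integrates to zero. This is false for $\varkappa>1$: the factors $\tilde\varphi_1,\dots,\tilde\varphi_{\varkappa-1}$ sitting to the \emph{left} of $b_\varkappa$ are \emph{not} closed (indeed $d\tilde\varphi_j=d(|\varphi_j|^{-\varepsilon_j})\wedge\varphi_j$), so writing $E(b_\varkappa)=d\gamma_\varkappa$ you cannot conclude that
$\tilde\varphi_1\wedge\dots\wedge\tilde\varphi_{\varkappa-1}\wedge d\gamma_\varkappa\wedge\varphi_{\varkappa+1}\wedge\dots\wedge\varphi_k\wedge dx_{\hat I}$
is exact, and Stokes does not kill it. Since only the coexact piece $E^*(b_\varkappa)$ carries the crucial factor $|\varepsilon_\varkappa|$ (your commutator bound), the argument as written does not deliver the gain $\max|\varepsilon_\varkappa|$.

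The paper circumvents this by avoiding the asymmetric telescoping altogether: it Hodge-decomposes \emph{each} perturbed form simultaneously,
$\tilde\varphi_\varkappa=d\alpha_\varkappa+d^*\beta_\varkappa$, and expands the full product into $2^k$ terms. The single term $\int d\alpha_1\wedge\dots\wedge d\alpha_k\wedge dx_{\hat I}$ is a wedge of \emph{all} exact (hence closed) factors, so Stokes applies legitimately; each of the remaining $2^k-1$ terms contains at least one $d^*\beta_\varkappa$ and is then estimated directly by H\"older with exponents $p_1,\dots,p_k$, using $\|d\alpha_j\|_{L^{p_j}}\le C\|\varphi_j\|_{L^{(1-\varepsilon_j)p_j}}^{1-\varepsilon_j}$ and the commutator bound $\|d^*\beta_\varkappa\|_{L^{p_\varkappa}}\le C|\varepsilon_\varkappa|\|\varphi_\varkappa\|_{L^{(1-\varepsilon_\varkappa)p_\varkappa}}^{1-\varepsilon_\varkappa}$. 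No further integration by parts is needed on those $2^k-1$ terms, which is precisely what your telescoping would force you into. Your sketch becomes correct if you replace the telescoping by this full binomial expansion of the Hodge decompositions.
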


\begin{remark}
For the case 
$\hat l=0$,
i.e.\
$dx_{\hat I}=1$,
the 
estimate~\eqref{eq:estmts-beyond-ntrl-exp}
was established in 
\cite[Theorem~13.6.1]{IwanM2001}.
In the proof of
Theorem~\ref{th:estmts-beyond-ntrl-exp}
we use the technique of Hodge decompositions developed 
in~\cite{IwanM2001}
(see also
\cite{Iwan1992,IwanM1993})
and applied for proving of
\cite[Theorem~13.6.1]{IwanM2001}. 
\end{remark}

\begin{proof}[Proof of Theorem~\ref{th:estmts-beyond-ntrl-exp}]
Observe that
$(1-\varepsilon_\varkappa)p_\varkappa\ge\frac{p_\varkappa+1}2>1$,
$\varkappa=1,\dots,k$.
We have
$\frac{\varphi_\varkappa}{|\varphi_\varkappa|^{\varepsilon_\varkappa}}
\in L^{p_\varkappa}(\Bbb R^n;\Lambda^{l_\varkappa})$.
Denote by
$W^{1,p}(\Bbb R^n;\Lambda^l)$,
$0\le l\le n$,
$p\ge0$, 
the space of differential 
$l$-forms
on~$\Bbb R^n$
with coefficients in
$W^{1,p}(\Bbb R^n)$.
We can consider the following Hodge decomposition in
$L^{p_\varkappa}(\Bbb R^n;\Lambda^{l_\varkappa})$
(\cite[Theorem~6.1]{IwanM1993},
see also
\cite[\S~10.6]{IwanM2001}):
\begin{equation}
\label{eq:Hodge-dcmpstn}
\frac{\varphi_\varkappa}{|\varphi_\varkappa|^{\varepsilon_\varkappa}}
=d\alpha_\varkappa+d^*\beta_\varkappa
\end{equation}
with some 
$\alpha_\varkappa\in W^{1,p_\varkappa}(\Bbb R^n;\Lambda^{l_\varkappa-1})$
and
$\beta_\varkappa\in W^{1,p_\varkappa}(\Bbb R^n;\Lambda^{l_\varkappa+1})$.
Here 
$d$
is the exterior derivative,
and
$d^*$
is its formal adjoint, the coexterior derivative.
The forms
$d\alpha_\varkappa$
and
$d^*\beta_\varkappa$,
$\varkappa=1,\dots,k$,
are uniquely determined and can be expressed by means of 
the Hodge projection operators
$$
E\colon L^p(\Bbb R^n;\Lambda^l)
\to dW^{1,p}(\Bbb R^n;\Lambda^{l-1})
\quad
\text{and}
\quad
E^*\colon L^p(\Bbb R^n;\Lambda^l)
\to d^*W^{1,p}(\Bbb R^n;\Lambda^{l+1})
$$
defined 
by~\cite[\S~10.6, formulas~(10.71) and~(10.72)]{IwanM2001}
for 
$1<p<\infty$
and
$1\le l\le n-1$.
Namely we have
\begin{equation}
\label{eq:parts-Hodge-dcmpstn}
d\alpha_\varkappa=E\left(\frac{\varphi_\varkappa}
{|\varphi_\varkappa|^{\varepsilon_\varkappa}}\right)
\quad\text{and}\quad
d^*\beta_\varkappa=E^*\left(\frac{\varphi_\varkappa}
{|\varphi_\varkappa|^{\varepsilon_\varkappa}}\right).
\end{equation}
Applying~\cite[Theorem~6.1]{IwanM1993},
we get the following bound for exact term:
\begin{equation}
\label{eq:bound-exact-term}
\|d\alpha_\varkappa\|_{L^{p_\varkappa}(\Bbb R^n;\Lambda^{l_\varkappa})}
\le C_1(p_\varkappa)
\|\varphi_\varkappa\|
_{L^{(1-\varepsilon_\varkappa)p_\varkappa}(\Bbb R^n;\Lambda^{l_\varkappa})}
^{1-\varepsilon_\varkappa}.
\end{equation}
By~\cite[\S~10.6, formulas~(10.73) and~(10.74)]{IwanM2001}
we have
$\operatorname{Ker}E=\{\varphi\in L^p(\Bbb R^n;\Lambda^l):\ d^*\varphi=0\}$
and
$\operatorname{Ker}E^*=\{\varphi\in L^p(\Bbb R^n;\Lambda^l):\ d\varphi=0\}$
for 
$1<p<\infty$
and
$1\le l\le n-1$.
Then
$E^*(\varphi_\varkappa)=0$.
Therefore we can write 
$d^*\beta_\varkappa$
as a commutator
$$
d^*\beta_\varkappa=E^*\left(\frac{\varphi_\varkappa}
{|\varphi_\varkappa|^{\varepsilon_\varkappa}}\right)
-\frac{E^*(\varphi_\varkappa)}
{|E^*(\varphi_\varkappa)|^{\varepsilon_\varkappa}}.
$$
Applying~~\cite[Theorem~12.2.1]{IwanM1993} 
(see also
\cite[Theorems~8.1 and~8.2]{Iwan1992}), 
we obtain
\begin{equation}
\label{eq:bound-coexact-term}
\|d\beta_\varkappa\|_{L^{p_\varkappa}(\Bbb R^n;\Lambda^{l_\varkappa})}
\le C_2(p_\varkappa)|\varepsilon_\varkappa|
\|\varphi_\varkappa\|
_{L^{(1-\varepsilon_\varkappa)p_\varkappa}(\Bbb R^n;\Lambda^{l_\varkappa})}
^{1-\varepsilon_\varkappa}.
\end{equation}
Using~\eqref{eq:Hodge-dcmpstn},
we have
\begin{multline}
\label{eq:all-Hodge-dcmpstns}
\int\frac{\varphi_1\wedge\dots\wedge\varphi_k\wedge dx_{\hat I}}
{|\varphi_1|^{\varepsilon_1}\dots|\varphi_k|^{\varepsilon_k}}
=\int (d\alpha_1+d^*\beta_1)\wedge\dots
\wedge(d\alpha_k+d^*\beta_k)\wedge dx_{\hat I}
\\
=\int d\alpha_1\wedge\dots\wedge d\alpha_k\wedge dx_{\hat I}
+\int\mathcal B.
\end{multline}
Since 
$p_1,\dots,p_k$
represents a H\"older conjugate tuple, by Stokes' formula via an 
approximation argument we obtain
\begin{equation}
\label{eq:Stokes}
\int d\alpha_1\wedge\dots\wedge d\alpha_k\wedge dx_{\hat I}=0.
\end{equation}
The 
integrand~$\mathcal B$
is a sum of wedge products of the type
$\psi_1\wedge\dots\wedge\psi_k\wedge dx_{\hat I}$,
where
$\psi_\varkappa$
is either
$d\alpha_\varkappa$
or
$d^*\beta_\varkappa$ 
and at least one
$d^*\beta_\varkappa$ 
is always present, with at most
$2^k-1$
terms.
Combining H\"older inequality 
with~\eqref{eq:bound-exact-term}
and~\eqref{eq:bound-coexact-term},
we get
\begin{multline*}
\int\psi_1\wedge\dots\wedge\psi_k\wedge dx_{\hat I}
\le C_3(k)
\|\psi_1\|_{L^{p_1}(\Bbb R^n;\Lambda^{l_1})}
\dots
\|\psi_k\|_{L^{p_k}(\Bbb R^n;\Lambda^{l_k})}
\\
\le C_4(p_1,\dots,p_k)\varepsilon
\|\varphi_1\|_{L^{(1-\varepsilon_1)p_1}(\Bbb R^n;\Lambda^{l_1})}^{1-\varepsilon_1}
\dots
\|\varphi_k\|_{L^{(1-\varepsilon_k)p_k}(\Bbb R^n;\Lambda^{l_k})}^{1-\varepsilon_k}
\end{multline*}
with
$\varepsilon:=\max(|\varepsilon_1|,\dots,|\varepsilon_k|)$.
This 
with~\eqref{eq:all-Hodge-dcmpstns}
and~\eqref{eq:Stokes}
yields~\eqref{eq:estmts-beyond-ntrl-exp}.
\end{proof}

The following theorem is a modification of the results 
of T.~Iwaniec and G.~Martin on integral estimates for Jacobians 
\cite[Theorems~7.8.1 and~13.7.1]{IwanM2001}
and is a consequence of Theorem~\ref{th:estmts-beyond-ntrl-exp}.

\begin{theorem}[Fundamental inequality for subdeterminants]
\label{th:L-p-ineqlty-minors-Jacbn-mtrx} 
Let
$n,m,k\in\Bbb N$ 
with
$2\le k\le\min(m,n)$.
Then there exists a constant
$C(k)\ge1$
such that for every distribution
$v=(v_1,\dots,v_m)\in \mathcal D'(\Bbb R^n;\Bbb R^m)$
with 
$v'\in L^p(\Bbb R^n;\Bbb R^{m\times n})$,
$1\le p<\infty$,
and for every
$I=(i_1,\dots,i_k)\in\Gamma_m^k$,
$J=(j_1,\ldots,j_k)\in\Gamma_n^k$
we have the inequality
\begin{equation}
\label{eq:L-p-ineqlty-minors-Jacbn-mtrx}
\left|\int|v'|^{p-k}\frac{\partial v_J}{\partial x_I}\right|
\le C(k)\left|1-\frac pk\right|\int|v'|^p.
\end{equation}
\end{theorem}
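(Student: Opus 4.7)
The plan is to reduce~\eqref{eq:L-p-ineqlty-minors-Jacbn-mtrx} to Theorem~\ref{th:estmts-beyond-ntrl-exp} by expressing the $k\times k$ subdeterminant as a wedge of closed $1$-forms. First, by mollification I would reduce to the case when $v$ is smooth with compactly supported gradient so that all manipulations with forms below are justified, and then recover the general distributional case by a density argument.

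Fix $I\in\Gamma_n^k$ and let $\hat I\in\Gamma_n^{n-k}$ be its ordered complement in $\{1,\dots,n\}$. Using $dv_{j_1}\wedge\dots\wedge dv_{j_k}=\sum_{I'\in\Gamma_n^k}\operatorname{det}_{JI'}v'(x)\,dx_{I'}$ and the fact that wedging with $dx_{\hat I}$ kills all terms except $I'=I$, I would rewrite the integrand as the top form
\[
|v'|^{p-k}\frac{\partial v_J}{\partial x_I}\,dx_1\wedge\dots\wedge dx_n=\pm|v'|^{p-k}\,dv_{j_1}\wedge\dots\wedge dv_{j_k}\wedge dx_{\hat I},
\]
and then apply Theorem~\ref{th:estmts-beyond-ntrl-exp} with $\varphi_\varkappa=dv_{j_\varkappa}$ (closed $1$-forms, $l_\varkappa=1$, $\hat l=n-k$), $p_\varkappa=k$ for every $\varkappa$ (so that $\sum 1/p_\varkappa=1$) and $\varepsilon_\varkappa=\varepsilon:=1-p/k$. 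Then $(1-\varepsilon)p_\varkappa=p$, and using $|dv_{j_\varkappa}|\le|v'|$ the right-hand side of~\eqref{eq:estmts-beyond-ntrl-exp} is controlled by $C(k)|1-p/k|\int|v'|^p$, which is exactly the right-hand side of~\eqref{eq:L-p-ineqlty-minors-Jacbn-mtrx}.

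The main obstacle is matching the weight $|v'|^{p-k}=|v'|^{-k\varepsilon}$ appearing in~\eqref{eq:L-p-ineqlty-minors-Jacbn-mtrx} with the product weight $\prod_\varkappa|dv_{j_\varkappa}|^{-\varepsilon}$ produced by Theorem~\ref{th:estmts-beyond-ntrl-exp}. To handle this I would redo the Hodge decomposition step of the proof of Theorem~\ref{th:estmts-beyond-ntrl-exp} with the common scalar weight $|v'|^{-\varepsilon}$, decomposing $|v'|^{-\varepsilon}dv_{j_\varkappa}=d\alpha_\varkappa+d^*\beta_\varkappa$. Since $dv_{j_\varkappa}$ is closed, $E^*(dv_{j_\varkappa})=0$, so the coexact part is still a commutator $d^*\beta_\varkappa=E^*(|v'|^{-\varepsilon}dv_{j_\varkappa})-|v'|^{-\varepsilon}E^*(dv_{j_\varkappa})$, and the nonlinear commutator estimate of~\cite[Theorem~12.2.1]{IwanM1993} yields $\|d^*\beta_\varkappa\|_{L^k}\le C|\varepsilon|\|v'\|_{L^p}^{1-\varepsilon}$, while the bound on the exact term is $\|d\alpha_\varkappa\|_{L^k}\le C\|v'\|_{L^p}^{1-\varepsilon}$. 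Expanding the wedge product, killing the fully-exact term by Stokes and estimating the remaining $2^k-1$ mixed terms by H\"older (each of them containing at least one small coexact factor) then gives~\eqref{eq:L-p-ineqlty-minors-Jacbn-mtrx}; the admissibility restrictions on $\varepsilon$ inherited from Theorem~\ref{th:estmts-beyond-ntrl-exp} can be lifted to the full range $1\le p<\infty$ by splitting according to the sign of $\varepsilon$ and a standard duality or extrapolation argument.
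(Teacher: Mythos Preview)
Your overall reduction is the paper's: write $\frac{\partial v_J}{\partial x_I}\,dx=\pm\,dv_{j_1}\wedge\dots\wedge dv_{j_k}\wedge dx_{\hat I}$, take $p_\varkappa=k$, $l_\varkappa=1$, $\varepsilon_\varkappa=\varepsilon=1-p/k$, and feed the closed $1$-forms $dv_{j_\varkappa}$ into Theorem~\ref{th:estmts-beyond-ntrl-exp}. You also correctly isolate the only real issue, the mismatch between the weight $|v'|^{-k\varepsilon}$ and the product $\prod_\varkappa|dv_{j_\varkappa}|^{-\varepsilon}$. But both places where you depart from the paper's argument have gaps.

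\emph{The weight mismatch.} Your fix is to redo the Hodge decomposition on $|v'|^{-\varepsilon}dv_{j_\varkappa}$ and invoke the nonlinear commutator estimate for $d^*\beta_\varkappa=E^*(|v'|^{-\varepsilon}dv_{j_\varkappa})-|v'|^{-\varepsilon}E^*(dv_{j_\varkappa})$. This is not the commutator covered by \cite[Theorem~12.2.1]{IwanM1993}: that result compares $E^*\bigl(|\varphi|^{-\varepsilon}\varphi\bigr)$ with $|E^*\varphi|^{-\varepsilon}E^*\varphi$, i.e.\ the weight must be the modulus of the very form being acted on, not an external quantity like $|v'|$. Since $E^*(dv_{j_\varkappa})=0$, what you actually need is $\|E^*(|v'|^{-\varepsilon}dv_{j_\varkappa})\|_{L^k}\le C|\varepsilon|\|v'\|_{L^p}^{1-\varepsilon}$ directly, and the cited theorem does not give this. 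One can salvage it by writing $|v'|^{-\varepsilon}dv_{j_\varkappa}=|dv_{j_\varkappa}|^{-\varepsilon}dv_{j_\varkappa}+(|v'|^{-\varepsilon}-|dv_{j_\varkappa}|^{-\varepsilon})dv_{j_\varkappa}$ and bounding the second piece pointwise by $C|\varepsilon|\,|v'|^{1-\varepsilon}$, but then you are using precisely the elementary device that makes the detour through a new Hodge decomposition unnecessary. The paper instead applies Theorem~\ref{th:estmts-beyond-ntrl-exp} \emph{as stated} and then corrects the weight afterwards by the pointwise identity
\[
\left|\frac{\partial v_J/\partial x_I}{|v'|^{k\varepsilon}}-\frac{\partial v_J/\partial x_I}{\prod_\varkappa|dv_{j_\varkappa}|^{\varepsilon}}\right|
=\frac{|\partial v_J/\partial x_I|}{\prod_\varkappa|dv_{j_\varkappa}|}\,|v'|^{p}\,\Bigl|a-a^{1-\varepsilon}\Bigr|,
\qquad a=\frac{\prod_\varkappa|dv_{j_\varkappa}|}{|v'|^{k}}\in[0,1],
\]
together with Hadamard's inequality $|\partial v_J/\partial x_I|\le\prod_\varkappa|dv_{j_\varkappa}|$ and the elementary bound $|a-a^{1-\varepsilon}|\le|\varepsilon|$ for $a\in[0,1]$, $|\varepsilon|<1$. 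This yields an error of at most $|\varepsilon|\int|v'|^p$, which is exactly the right-hand side of~\eqref{eq:L-p-ineqlty-minors-Jacbn-mtrx}.

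\emph{The full range $1\le p<\infty$.} No duality or extrapolation is needed. The admissibility constraint $-1\le2\varepsilon\le(k-1)/k$ from Theorem~\ref{th:estmts-beyond-ntrl-exp} amounts to $(k+1)/2\le p\le 3k/2$. Outside this interval one has $|1-p/k|\ge(k-1)/(2k)$, while Hadamard gives the trivial bound $\bigl|\int|v'|^{p-k}\frac{\partial v_J}{\partial x_I}\bigr|\le\int|v'|^{p}$ for every $p\ge1$; hence~\eqref{eq:L-p-ineqlty-minors-Jacbn-mtrx} holds there with $C(k)=2k/(k-1)$.
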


\begin{remark}
For the case 
$k=n=m$
the 
estimate~\eqref{eq:L-p-ineqlty-minors-Jacbn-mtrx}
was established in
\cite[Theorems~7.8.1 and~13.7.1]{IwanM2001}.
\end{remark}

\begin{proof}[Proof of Theorem~\ref{th:L-p-ineqlty-minors-Jacbn-mtrx}]
Let
$p_\varkappa:=k$,
$\varepsilon_\varkappa:=\varepsilon:=1-\frac pk$,
and
$l_\varkappa:=1$
for
$\varkappa=1,\dots,k$.
Then
$1<p_\varkappa<\infty$,
$\frac1{p_1}+\dots+\frac1{p_k}=1$,
$\hat l:=n-k=n-l_1-\dots-l_k\ge0$,
$(1-\varepsilon_\varkappa)p_\varkappa=p$,
and
$\max(|\varepsilon_1|,\dots,|\varepsilon_k|)=|\varepsilon|
=\left|1-\frac pk\right|$.
Let
$\varphi_\varkappa:=dv_{j_\varkappa}
\in L^{(1-\varepsilon_\varkappa)p_\varkappa}
(\Bbb R^n;\Lambda^{l_\varkappa})$.
Let 
$\hat I=(\hat i_1,\dots,\hat i_{\hat l})\in\Gamma_n^{\hat l}$
be the ordered
$\hat l$-tuple
such that
$\{\hat i_1,\dots,\hat i_{\hat l}\}
=\{1,\dots,n\}\setminus\{i_1,\dots,i_k\}$.
We chose the sign
$\operatorname{sgn}I$
such that
$\operatorname{sgn}Idx_I\wedge dx_{\hat I}=dx_1\wedge\dots\wedge dx_n$.

When
$p$
lies outside the interval
$\left(\frac{k+1}2,\frac{3k}2\right)$
the estimate is clear 
as~\eqref{eq:L-p-ineqlty-minors-Jacbn-mtrx}
always holds with~1 in place
$C(k)\left|1-\frac pk\right|$.
In this case
$\left|1-\frac pk\right|\ge\frac{k-1}{2k}$
and 
inequality~\eqref{eq:L-p-ineqlty-minors-Jacbn-mtrx}
holds with
$C(k)=\frac{2k}{k-1}$.

Suppose that
$k+1\le2p\le3k$.
Then
$-1\le2\varepsilon_\varkappa\le\frac{p_\varkappa-1}{p_\varkappa}$
and
$|\varepsilon|\le1/2$.
Applying 
Theorem~\ref{th:estmts-beyond-ntrl-exp},
we obtain
\begin{multline}
\label{eq:estmts-beyond-ntrl-exp-minors-Jacbn-mtrx}
\int\frac{\frac{\partial v_J}{\partial x_I}}
{|dv_{j_1}|^\varepsilon\dots|dv_{j_k}|^\varepsilon}
=\int\frac{\operatorname{sgn}Idv_{j_1}\wedge\dots\wedge dv_{j_k}\wedge dx_{\hat I}}
{|dv_{j_1}|^{\varepsilon_1}\dots|dv_{j_k}|^{\varepsilon_k}}
\\
\le C_1(k)|\varepsilon|
\|dv_{j_1}\|_{L^p(\Bbb R^n;\Lambda^1)}^{1-\varepsilon}
\dots\|dv_{j_k}\|_{L^p(\Bbb R^n;\Lambda^1)}^{1-\varepsilon}
\le C_1(k)\varepsilon\int|v'|^p.
\end{multline}
Using the elementary inequalities
$\left|\frac{\partial v_J}{\partial x_I}\right|
\le |dv_{j_1}|\dots|dv_{j_k}|$
and
$|a-a^{1-\varepsilon}|\le|\varepsilon|$
for 
$0\le a\le1$
and
$-1<\varepsilon<1$,
we have
\begin{multline*}
\left|\frac{\frac{\partial v_J}{\partial x_I}}{|v'|^{\varepsilon k}}
-\frac{\frac{\partial v_J}{\partial x_I}}
{|dv_{j_1}|^\varepsilon\dots|dv_{j_k}|^\varepsilon}\right|
\\
=\frac{\left|\frac{\partial v_J}{\partial x_I}\right||v'|^p}
{|dv_{j_1}|\dots|dv_{j_k}|}
\left|\frac{|dv_{j_1}|\dots|dv_{j_k}|}{|v'|^k}
-\left(\frac{|dv_{j_1}|\dots|dv_{j_k}|}{|v'|^k}\right)^{1-\varepsilon}\right|
\le |\varepsilon||v'|^p.
\end{multline*}
Combining this 
with~\eqref{eq:estmts-beyond-ntrl-exp-minors-Jacbn-mtrx},
we obtain
\begin{multline*}
\left|\int|v'|^{p-k}\frac{\partial v_J}{\partial x_I}\right|
\\
\le\int\left|\frac{\frac{\partial v_J}{\partial x_I}}{|v'|^{\varepsilon k}}
-\frac{\frac{\partial v_J}{\partial x_I}}
{|dv_{j_1}|^\varepsilon\dots|dv_{j_k}|^\varepsilon}\right|
+\int\left|\frac{\frac{\partial v_J}{\partial x_I}}
{|dv_{j_1}|^\varepsilon\dots|dv_{j_k}|^\varepsilon}\right|
\le (C_1(k)+1)|\varepsilon|\int|v'|^p.
\end{multline*}
\end{proof}

In the proof of 
Theorem~\ref{th:reglrty}
we use the following version of Gehring's lemma 
(see, for example,~\cite[Corollary~14.3.1]{IwanM2001}):

\begin{lemma}[Gehring's Lemma]
\label{l:Gehring}
Suppose 
$f$
and
$g$
are non-negative functions of class
$L^q(\Bbb R^n)$,
$1<q<\infty$,
and satisfy
$$
\left(\frac1{|B(a,R)|}\int_Qf^q\right)^{1/q}
\le\frac A{B(a,2R)}\int_{B(a,2R)}f
+\left(\frac1{|B(a,2R)|}\int_{B(a,2R)}g^q\right)^{1/q}
$$
for all balls
$B(a,R)\subset\Bbb R^n$
and some constant
$A>0$.
Then there exists a new exponent 
$q'=q'(n,q,A)>p$
and a constant
$C=C(n,q,A)>0$
such that
$$
\int f^{q'}\le C\int g^{q'}.
$$
\end{lemma}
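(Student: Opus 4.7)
The plan is to deduce the higher integrability of $f$ from the reverse-H\"older-type hypothesis via a classical Calder\'on--Zygmund stopping-time argument coupled with a good-$\lambda$ distributional inequality, and then to convert that distributional inequality into a bound on $\int f^{q'}$ by integrating in $\lambda$ and absorbing.

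First I would pass from the averaged inequality to a pointwise maximal-function version. Writing $Mh$ for the centered Hardy--Littlewood maximal function and $M_qh := (M(h^q))^{1/q}$, applying the hypothesis at every ball $B(x,R)$ yields the pointwise bound $M_qf(x) \le A\cdot Mf(x) + M_qg(x)$ for a.e.\ $x$. This is the form most convenient for a stopping-time argument. Next, for each $\lambda > \lambda_0$ (with $\lambda_0$ chosen large in terms of the data), I would apply a Vitali/Calder\'on--Zygmund covering to the open level set $\{M_qf > \lambda\}$, producing a countable essentially-disjoint family of balls $\{B_i\}$ on which the $q$-average of $f$ over $B_i$ is comparable to $\lambda$ while the analogous average over $2B_i$ is $\lesssim\lambda$. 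Applying the hypothesis on each $B_i$, splitting the right-hand-side integrals according to whether $f \gtrless c_1\lambda$ and $g \gtrless c_2\lambda$, and summing in $i$, would produce a good-$\lambda$ inequality of the form
\begin{equation*}
\int_{\{f > \beta\lambda\}} f^q
\;\le\; C_1\,\lambda^{q-1}\!\!\int_{\{f > \alpha\lambda\}} f
\;+\; C_2\!\!\int_{\{g > \alpha\lambda\}} g^q
\;+\; \theta\!\!\int_{\{f > \lambda\}} f^q,
\end{equation*}
with $\theta = \theta(n,q,A) < 1$, constants $\alpha,\beta > 0$, and $C_i = C_i(n,q,A)$; the term with $\theta$ comes from the ``small'' part of $f$ and can be absorbed into the left-hand side by a standard iteration in $\lambda$.

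Third, I would multiply this distributional inequality by $\lambda^{\varepsilon-1}$ for a small parameter $\varepsilon > 0$, integrate in $\lambda$ over $(\lambda_0,\infty)$, and apply Fubini to rewrite the distribution integrals as Lebesgue integrals. The outcome, schematically, is
\begin{equation*}
\int f^{q+\varepsilon}
\;\le\; \frac{C_1\,\beta^{q+\varepsilon-1}}{\alpha^{\varepsilon}}\,\varepsilon\!\int f^{q+\varepsilon}
\;+\; \frac{C_2}{\alpha^{\varepsilon}}\int g^{q+\varepsilon}
\;+\; (\text{bounded tail from } \lambda \le \lambda_0),
\end{equation*}
so that the first term on the right becomes strictly smaller than $\int f^{q+\varepsilon}$ as soon as $\varepsilon = \varepsilon(n,q,A)$ is chosen small enough. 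Setting $q' := q+\varepsilon$ and absorbing finishes the estimate $\int f^{q'} \le C\int g^{q'}$.

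The main obstacle is the absorption step: moving $\int f^{q+\varepsilon}$ to the left-hand side requires knowing a priori that this quantity is finite. The standard remedy is a truncation argument: replace $f$ by $f_N := \min(f,N)$, verify that the reverse-H\"older hypothesis is inherited (with constants depending only on $n,q,A$), derive the bound $\int f_N^{q'} \le C\int g^{q'}$ uniformly in $N$, and pass to the limit by monotone convergence. Executing this truncation cleanly, while carefully tracking the constants $\alpha,\beta,\theta,C_1,C_2$ through the stopping-time/covering step and verifying that the final exponent $q'$ genuinely depends only on $n$, $q$, and $A$, is the technical heart of the proof.
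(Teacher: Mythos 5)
The paper does not actually prove this lemma: it is quoted as a known result from \cite{IwanM2001} (Corollary~14.3.1), so there is no internal proof to compare against. Your outline --- the pointwise reduction $M_qf\le A\,Mf+M_qg$ to maximal functions, a Calder\'on--Zygmund/Vitali stopping-time decomposition of the level sets, the resulting good-$\lambda$ distributional inequality, integration against $\lambda^{\varepsilon-1}$, and absorption for $\varepsilon$ small depending only on $n,q,A$ --- is precisely the classical Gehring argument used in the cited source, so in substance you are reproducing the intended proof rather than inventing a new one.

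One step is shaky as written, namely your remedy for the finiteness needed in the absorption. It is not clear that $f_N:=\min(f,N)$ ``inherits'' the weak reverse H\"older hypothesis with constants depending only on $n,q,A$: the right-hand side of the hypothesis contains averages of $f$ itself, and replacing $f$ by $f_N$ there only makes the right-hand side smaller, so the inequality for $f_N$ does not follow by monotonicity and would require a separate (nontrivial) argument. The standard and cleaner fix is to truncate the $\lambda$-integration rather than the function in the hypothesis: integrate the good-$\lambda$ inequality only over $\lambda_0<\lambda<N$, which produces on both sides the finite quantities controlled by $\int f^q\min(f,N)^{\varepsilon}\le N^{\varepsilon}\int f^q<\infty$ (here $f\in L^q$ is used), absorb at that stage, and then let $N\to\infty$ by monotone convergence. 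With that modification, and with the bookkeeping you already flag to ensure $q'=q+\varepsilon$ and $C$ depend only on $n$, $q$, $A$, the argument is complete. (Incidentally, the statement as printed contains two misprints which your proof implicitly corrects: $q'>p$ should read $q'>q$, and $\int_Q$ should be $\int_{B(a,R)}$.)
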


\section{Proof of the Self-improving Regularity Theorem}

We are now in a position to prove 
Theorem~\ref{th:reglrty} 
given in
Section~\ref{sc:main-reslts}.

\begin{proof}[Proof of Theorem~\ref{th:reglrty}]
Let
$p>1$.
Obviously, we may assume that
$\varphi\ge0$
as otherwise we could consider
$|\varphi|$
which has not effect on 
inequality~\eqref{eq:Caccioppoli-ineqlty}.
Consider the auxiliary mapping
$h:=\varphi v\in W^{1,p}(\Bbb R^n;\Bbb R^m)$.
We have
$h'=\varphi v'+v\otimes\varphi'$.
Using 
\eqref{eq:l-u-bnds-F}
and
\eqref{eq:nl-ineqlty},
we deduce
\begin{multline}
\label{eq:bnd-dervtv-h-k}
|h'|^k\le(|\varphi v'|+|v\otimes\varphi'|)^k
=\varphi^k|v'|^k
+\sum_{\varkappa=0}^{k-1}\binom k\varkappa
|\varphi v'|^\varkappa|v\otimes\varphi'|^{k-\varkappa}
\\
\le c_F^{-1}\varphi^kF(v')
+\sum_{\varkappa=0}^{k-1}\binom k\varkappa
(|h'|+|v\otimes\varphi'|)^\varkappa|v\otimes\varphi'|^{k-\varkappa}
\\
\le c_F^{-1}\varphi^kF(v')
+\sum_{\varkappa=0}^{k-1}\binom k\varkappa
(|h'|+|v\otimes\varphi'|)^\varkappa|v\otimes\varphi'|^{k-\varkappa}
\\
\le c_F^{-1}KG(\varphi v')
+C_1(k)(|h'|+|v\otimes\varphi'|)^{k-1}|v\otimes\varphi'|
\\
=c_F^{-1}KG(h'-v\otimes\varphi')
+C_1(k)(|h'|+|v\otimes\varphi'|)^{k-1}|v\otimes\varphi'|
\\
\le c_F^{-1}KG(h')
+C_2(F,G)K(|h'|+|v\otimes\varphi'|)^{k-1}|v\otimes\varphi'|.
\end{multline}
Multiplying this inequality by
$|h'|^{p-k}$,
after a little manipulation we obtain
\begin{equation}
\label{eq:bnd-dervtv-h-p}
|h'|^p\le c_F^{-1}K|h'|^{p-k}G(h')
+C_3(F,G,p)K(|h'|+|v\otimes\varphi'|)^{p-1}|v\otimes\varphi'|.
\end{equation}
We observe here that clearly
$(|h'|+|v\otimes\varphi'|)^{p-1}|v\otimes\varphi'|$
enjoys higher integrability than
$|h'|^p$.
Using 
\eqref{eq:reprsntn-G},
we have
$|h'|^{p-k}G(h')
=\sum_{J\in\Gamma_m^k,I\in\Gamma_n^k}
\gamma_{JI}|h'|^{p-k}\operatorname{det}_{JI}v'$.
Applying 
Theorem~\ref{th:L-p-ineqlty-minors-Jacbn-mtrx},
we obtain
$\int|h'|^{p-k}G(h')\le C_4(G)\left|1-\frac pk\right|\int|h'|^p$.
Combining this 
with~\eqref{eq:bnd-dervtv-h-p},
we get
\begin{equation}
\label{eq:bnd-int-dervtv-h}
\int|h'|^p\le 
\frac{C_4(G)K}{c_F}\left|1-\frac pk\right|\int|h'|^p
+C_3(F,G,p)K\int(|h'|+|v\otimes\varphi'|)^{p-1}|v\otimes\varphi'|.
\end{equation}

Put 
$q(F,G,K)=k\left(1-\frac{c_F}{C_4(G)K}\right)$
and
$p(F,G,K)=k\left(1+\frac{c_F}{C_4(G)K}\right)$.
Suppose now that
$p\in(q(F,G,K),p(F,G,K))$.
Then
$\frac{C_4(G)K}{c_F}\left|1-\frac pk\right|<1$.
In this case 
inequality~\eqref{eq:bnd-int-dervtv-h} 
can be expresed as
$$
\int|h'|^p\le 
\frac{C_3(F,G,p)K}{1-\frac{C_4(G)K}{c_F}\left|1-\frac pk\right|}
\int(|h'|+|v\otimes\varphi'|)^{p-1}|v\otimes\varphi'|.
$$
We have
\begin{multline*}
\int(|h'|+|v\otimes\varphi'|)^p
\le2^{p-1}\int(|h'|^p+|v\otimes\varphi'|^p)
\\
\le2^{p-1}\left( 
\frac{C_3(F,G,p)K}{1-\frac{C_4(G)K}{c_F}\left|1-\frac pk\right|}
\int(|h'|+|v\otimes\varphi'|)^{p-1}|v\otimes\varphi'|
+\int|v\otimes\varphi'|^p\right)
\\
\le C(F,G,K,p)\int(|h'|+|v\otimes\varphi'|)^{p-1}|v\otimes\varphi'|
\\
\le C(F,G,K,p)\left[\int(|h'|+|v\otimes\varphi'|)^p\right]^{\frac{p-1}p}
\left[\int|v\otimes\varphi'|^p\right]^{\frac1p}.
\end{multline*}
Hence
$\||h'|+|v\otimes\varphi'|\|_{L^p(\Bbb R^n)}
\le C(F,G,K,p)\|v\otimes\varphi'\|_{L^p(\Bbb R^n;\Bbb R^{m\times n})}$.
Then, in view of the simple fact that
$|\varphi v'|\le|h'|+|v\otimes\varphi'|$,
we obtain the Cacciappoli-type estimate
\begin{equation}
\label{eq:Caccioppoli-ineqlty-p}
\|\varphi v'\|_{L^p(\Bbb R^n;\Bbb R^{m\times n})}
\le C(F,G,K,p)\|v\otimes\varphi'\|_{L^p(\Bbb R^n;\Bbb R^{m\times n})}.
\end{equation}
Of course now we observe that this inequality holds 
with~$p$
replaced 
by~$s$
for 
any~$s\in(q(F,G,K),p(F,G,K))$, 
provided we know {\it a priori} that
$v\in W_{\operatorname{loc}}^{1,s}(V;\Bbb R^m)$.

Let 
$S=\{s\in(q(F,G,K),p(F,G,K)): 
v\in W_{\operatorname{loc}}^{1,s}(V;\Bbb R^m)\}$.
We have
$p\in S$.
Therefore,
$S\ne\varnothing$.
For 
$s\in S$
we 
have~\eqref{eq:Caccioppoli-ineqlty};
the constant
$C(F,G,K,p)$
which depends continuously 
on~$s$
is finite in the range
$q(F,G,K)<s<p(F,G,K)$
but may blow up at the endpoints. 
This shows that 
$S$
is relatively closed in
$(q(F,G,K),p(F,G,K))$.
The theorem will be proved if we can show that
$S$
is open.
Certainly, if
$s\in S$,
then
$(q(F,G,K),s]\subset(q(F,G,K),p(F,G,K))$.
We are therefore left only with the task of showing higher 
integrability of the differential.
It is at this point that Gehring's lemma comes to the rescue.
We easily derive 
from~\eqref{eq:bnd-dervtv-h-k}
reverse H\"older inequality for
$h'$.
Let
$B_R:=B(a,R)\subset B(a,2R)=:B_{2R}$
be a concentric balls
in~$V$
and let
$0\le\eta\le1$
be a function in
$C_0^\infty(B_{2R})$
which is equal 
to~$1$
on~$B_R$
and has
$|\eta'|\le\frac{C(n)}R$.
Now we repeat the above calculations with some modifications to 
obtain the Cacciapolli-type estimate 
for~$h-h_{B_{2R}}$
and~$\eta$,
where 
$h_{B_{2R}}:=\frac1{|B_{2R}|}\int_{B_{2R}}|h|$.
Consider the mapping
$H:=\eta(h-h_{B_{2R}})$.
We have
$H'=\eta h'+(h-h_{B_{2R}})\otimes\eta'$.
Using 
\eqref{eq:bnd-dervtv-h-k},
we deduce
\begin{multline}
\label{eq:bnd-dervtv-H-k}
|H'|^k\le(|\eta h'+(h-h_{B_{2R}})\otimes\eta'|)^k
\\
=\eta^k|h'|^k
+\sum_{\varkappa=0}^{k-1}\binom k\varkappa
|\eta h'|^\varkappa|(h-h_{B_{2R}})\otimes\eta'|^{k-\varkappa}
\\
\le c_F^{-1}KG(\eta h')
+C_2(F,G)K(|\eta h'|+\eta|v\otimes\varphi'|)^{k-1}\eta|v\otimes\varphi'|
\\
+\sum_{\varkappa=0}^{k-1}\binom k\varkappa
(|\eta h'|)^\varkappa|(h-h_{B_{2R}})\otimes\eta'|^{k-\varkappa}
\le c_F^{-1}KG(H'-(h-h_{B_{2R}})\otimes\eta')
\\
+C_2(F,G)K(|H'|+|(h-h_{B_{2R}})\otimes\eta'|+\eta|v\otimes\varphi'|)^{k-1}
\eta|v\otimes\varphi'|
\\
+\sum_{\varkappa=0}^{k-1}\binom k\varkappa
(|H'|+|(h-h_{B_{2R}})\otimes\eta'|)^\varkappa|(h-h_{B_{2R}})\otimes\eta'|^{k-\varkappa}
\le c_F^{-1}KG(H')
\\
+C_5(F,G)K(|H'|+|(h-h_{B_{2R}})\otimes\eta'|+\eta|v\otimes\varphi'|)^{k-1}
(|(h-h_{B_{2R}})\otimes\eta'|+\eta|v\otimes\varphi'|).
\end{multline}
Multiplying this inequality by
$|H'|^{p-n}$,
after a little manipulation we obtain
\begin{multline}
\label{eq:bnd-dervtv-H-p}
|H'|^p\le c_F^{-1}K|H'|^{p-n}G(H')
\\
+C_6(F,G,p)K(|H'|+|(h-h_{B_{2R}})\otimes\eta'|+\eta|v\otimes\varphi'|)^{k-1}
(|(h-h_{B_{2R}})\otimes\eta'|+\eta|v\otimes\varphi'|).
\end{multline}
Using 
again~\eqref{eq:reprsntn-G}
and 
Theorem~\ref{th:L-p-ineqlty-minors-Jacbn-mtrx},
we obtain
\begin{multline*}
\int|H'|^p\le 
\frac{C_6(F,G,p)K}{1-\frac{C_4(G)K}{c_F}\left|1-\frac pk\right|}\times
\\
\times\int(|H'|+|(h-h_{B_{2R}})\otimes\eta'|+\eta|v\otimes\varphi'|)^{p-1}
(|(h-h_{B_{2R}})\otimes\eta'|+\eta|v\otimes\varphi'|).
\end{multline*}
We have
\begin{multline*}
\int(|H'|+|(h-h_{B_{2R}})\otimes\eta'|+\eta|v\otimes\varphi'|)^p
\\
\le2^{p-1}\int(|H'|^p+(|(h-h_{B_{2R}})\otimes\eta'|+\eta|v\otimes\varphi'|)^p)
\le2^{p-1}\left( 
\frac{C_6(F,G,p)K}{1-\frac{C_4(G)K}{c_F}\left|1-\frac pk\right|}\right.\times
\\
\times
\int(|H'|+|(h-h_{B_{2R}})\otimes\eta'|+\eta|v\otimes\varphi'|)^{p-1}
(|(h-h_{B_{2R}})\otimes\eta'|+\eta|v\otimes\varphi'|)
\\
\left.+\int(|(h-h_{B_{2R}})\otimes\eta'|+\eta|v\otimes\varphi'|)^p\right)
\\
\le C_7(F,G,K,p)\int(|H'|+|(h-h_{B_{2R}})\otimes\eta'|+\eta|v\otimes\varphi'|)^{p-1}
(|(h-h_{B_{2R}})\otimes\eta'|+\eta|v\otimes\varphi'|)
\\
\le C_7(F,G,K,p)\left[
\int(|H'|+|(h-h_{B_{2R}})\otimes\eta'|+\eta|v\otimes\varphi'|)^p
\right]^{\frac{p-1}p}\times
\\
\times
\left[\int(|(h-h_{B_{2R}})\otimes\eta'|+\eta|v\otimes\varphi'|)^p\right]^{\frac1p}.
\end{multline*}
Hence
\begin{multline*}
\||H'|+|(h-h_{B_{2R}})\otimes\eta'|+\eta|v\otimes\varphi'|\|_{L^p(\Bbb R^n)}
\\
\le C_7(F,G,K,p)
\||(h-h_{B_{2R}})\otimes\eta'|+\eta|v\otimes\varphi'|\|_{L^p(\Bbb R^n)}.
\end{multline*}
Then, in view of the simple facts that
$$
|\eta h'|\le|H'|+|(h-h_{B_{2R}})\otimes\eta'|
\le|H'|+|(h-h_{B_{2R}})\otimes\eta'|+\eta|v\otimes\varphi'|
$$
and
$$
(|(h-h_{B_{2R}})\otimes\eta'|+\eta|v\otimes\varphi'|)^p
\le|(h-h_{B_{2R}})\otimes\eta'|^p+\eta|v\otimes\varphi'|^p,
$$
we obtain the Cacciappoli-type estimate
$$
\int|\eta h'|^p
\le C_8(F,G,K,p)\int|(h-h_{B_{2R}})\otimes\eta'|^p
+C_8(F,G,K,p)\int\eta|v\otimes\varphi'|^p.
$$
Using the properties of the test 
function~$\eta$,
we get
$$
\int_{B_R}|h'|^p
\le C_9(F,G,K,p)R^{-p}\int_{B_{2R}}|h-h_{B_{2R}}|^p
+C_9(F,G,K,p)\int_{B_{2R}}|v\otimes\varphi'|^p.
$$
Combining this with the Poincar\'e--Sobolev inequality (see, for example,
\cite[Theorem~4.10.3]{IwanM2001}),
we obtain
\begin{multline*}
\frac1{|B_R|}\int_{B_R}|h'|^p
\le C_{10}(F,G,K,p)\left(\frac1{B_{2R}}\int_{B_{2R}}|h'|^{\frac{np}{n+p}}\right)^{\frac{n+p}n}
\\
+\frac{C_{10}(F,G,K,p)}{B_{2R}}\int_{B_{2R}}|v\otimes\varphi'|^p.
\end{multline*}
Hence
\begin{multline*}
\left(\frac1{|B_R|}\int_{B_R}|h'|^p\right)^{\frac n{n+p}}
\le \frac{C_{11}(F,G,K,p)}{B_{2R}}\int_{B_{2R}}|h'|^{\frac{np}{n+p}}
\\
+\left(\frac1{|B_{2R}|}\int_{B_{2R}}(C_12(F,G,K,p)|v\otimes\varphi'|)^p\right)^{\frac n{n+p}}.
\end{multline*}
Put
$q=\frac{n+p}n>1$,
$f=|h'|^{\frac{np}{n+p}}$,
and
$g=|v\otimes\varphi'|^{\frac{np}{n+p}}$.
By 
Lemma~\ref{l:Gehring}
we conclude that
$f$
is integrable with a power slightly larger 
than~$q$.
This in turn means that
$h'$
is integrable with a slightly higher power 
then~$p$
and so
$v\in W_{\operatorname{loc}}^{1,p'}(V;\Bbb R^m)$
for some
$p'>p$.
\end{proof}

\section{Proof of the Removability Theorem}

As in the proof 
of~\cite[Theorem~17.3.1]{IwanM2001}
there are two key components in the proof of 
Theorem~\ref{th:remvblty}.
Firstly, the assumption on the size of 
set~$E$
implies that
$E$
has zero 
$s$-capacity
for an appropriate value 
of~$s$.
Secondly, the Cacciappoli 
estimate~\eqref{eq:Caccioppoli-ineqlty} 
holds for this particular value
of~$s$.

\begin{proof}[Proof of Theorem~\ref{th:remvblty}]
We have
$q(F,G,K)<n-\dim_H(E)$.
Let
$$
s\in(q(F,G,K),n-\dim_H(E)).
$$
From~\cite[Theorem~17.2.1]{IwanM2001} 
(see also~\cite{BojaI1983,Mazy1985,Rick1993,Vais1975}),
we obtain that the set
$E$
has zero 
$s$-capacity.
It is clear that
$|E|=0$.
Further,
Theorem~\ref{th:reglrty}
gives the Caccioppoli estimate
\begin{equation}
\label{eq:Caccioppoli-u}
\|\varphi v'\|_{L^s(V;\Bbb R^{m\times n})}
\le C\|v\otimes\varphi'\|_{L^s(V;\Bbb R^{m\times n})}
\end{equation}
for every
$\varphi\in C_0^\infty(V\setminus E)$,
where the constant 
$C=C(F,G,K,s)$
does not depend on the test 
function~$\varphi$
or the 
function~$v$.

Let
$\chi\in C_0^\infty(V)$
and
$E':=E\cap\operatorname{supp}\chi$.
Then
$E'$
has zero 
$s$-capacity.
Therefore there exists a sequence of functions
$(\eta_j\in C_0^\infty(V))_{j\in\Bbb N}$
such that
$0\le\eta_j\le1$;
$\eta_j=1$
on some neighbourhood
of~$E'$,
$\lim_{j\to\infty}\eta_j=0$
almost everywhere 
in~$\Bbb R^n$,
and
$\lim_{j\to\infty}\int|\eta'_j|^s=0$.
Put
$\varphi_j:=(1-\eta_j)\chi\in C_0^\infty(V\setminus E)$
and
$v_j:=\varphi_j v\in W_0^{1,s}(V;\Bbb R^m)$.
Then the 
mappings~$v_j$
are bounded in
$L^\infty(V;\Bbb R^m)$
converge to
$\chi v$
almost everywhere.
We have
$v'_j=\varphi_j v'+v\otimes \varphi'_j$
and
$\varphi'_j=-\chi\eta'_j+(1-\eta_j)\chi'$.
Using~\eqref{eq:Caccioppoli-u},
we obtain
\begin{multline*}
\|v'_j\|_{L^s(V;\Bbb R^{m\times n})}
\le\|\varphi_j v'\|_{L^s(V;\Bbb R^{m\times n})}
+\|v\otimes\varphi'_j\|_{L^s(V;\Bbb R^{m\times n})}
\\
\le(1+C)\|v\otimes \varphi'_j\|_{L^s(V;\Bbb R^{m\times n})}
\\
\le(1+C)\left(\|\chi\|_{L^\infty(V)}\|v\|_{L^\infty(V;\Bbb R^m)}
\|\eta'_j\|_{L^s(V;\Bbb R^n)}
+\|(1-\eta_j)v\otimes\chi'\|_{L^s(V;\Bbb R^{m\times n})}\right).
\end{multline*}
Passing to the limit 
over~$j$,
we get
\begin{equation}
\label{eq:bnd-dervtv-u-j}
\limsup_{j\to\infty}\|v'_j\|_{L^s(V;\Bbb R^{m\times n})}
\le(1+C)\|v\otimes\chi'\|_{L^s(V;\Bbb R^{m\times n})}.
\end{equation}
Therefore the sequence
$(v_j)_{j\in\Bbb N}$
is bounded in
$W^{1,s}(V;\Bbb R^m)$.
Hence there exists its subsequence 
$(v_{j_s})_{s\in\Bbb N}$
converges weakly in 
$W^{1,s}(V;\Bbb R^m)$
to a mapping in this Sobolev space.
Clearly, this limit coincides with
$\chi v$
almost everywhere 
in~$V$.

Therefore
$\chi v\in W_0^{1,s}(V;\Bbb R^m)$
for all test functions
$\chi\in C_0^\infty(V)$.
This yields
$v\in W_{\operatorname{loc}}^{1,s}(V;\Bbb R^m)$.
Since
$v$
is a solution of 
inequality~\eqref{eq:nl-ineqlty}
almost everywhere 
in~$V$,
Theorem~\ref{th:reglrty} 
yields 
$v\in W_{\operatorname{loc}}^{1,k}(V;\Bbb R^m)$.
\end{proof}

\bibliographystyle{amsunsrt}

\end{document}